\begin{document}
\newtheorem{theorem}[subsection]{Theorem}
\newtheorem{proposition}[subsection]{Proposition}
\newtheorem{lemma}[subsection]{Lemma}
\newtheorem{corollary}[subsection]{Corollary}
\newtheorem{conjecture}[subsection]{Conjecture}
\newtheorem{prop}[subsection]{Proposition}
\numberwithin{equation}{section}
\renewcommand{\thefootnote}{\fnsymbol{footnote}}
\newcommand{\dif}{\mathrm{d}}
\newcommand{\intz}{\mathbb{Z}}
\newcommand{\ratq}{\mathbb{Q}}
\newcommand{\natn}{\mathbb{N}}
\newcommand{\comc}{\mathbb{C}}
\newcommand{\rear}{\mathbb{R}}
\newcommand{\prip}{\mathbb{P}}
\newcommand{\uph}{\mathbb{H}}
\newcommand\psum{\mathop{\sum\nolimits^{\mathrlap{*}}}}
\newcommand\dsum{\mathop{\sum\nolimits^{\mathrlap{'}}}}

\title{Mean square values of Dirichlet $L$-functions associated to fixed order characters}

\author[C. C. Corrigan]{Chandler C. Corrigan} 
\address{School of Mathematics and Statistics, University of New South Wales, Sydney, NSW 2052, Australia}
\email{c.corrigan@student.unsw.edu.au}

\date{September, 2023}
\maketitle

\begin{abstract}
    The main purpose of this paper is to establish bounds on the second moment of $L\big(\tfrac{1}{2}+it,\chi\big)$, averaged over families of fixed order characters.  A discrete version of the main result is also stated, from which zero-density estimates pertaining to fixed order characters are derived.\\~\\\textit{Keywords:}  Dirichlet characters, Dirichlet $L$-functions, zero-density estimates.\\~\\\textit{2020 Mathematics subject classification:} 11M26, 11N35.
\end{abstract}

\section{Introduction}

    The distribution of values of Dirichlet $L$-functions on the critical line has been studied extensively in the literature.  One result in particular is that of Bourgain \cite{zz28}, who showed that for any $\varepsilon>0$ we have
    \begin{equation}\label{bourgain}
        \zeta\big(\tfrac{1}{2}+it\big)\ll|t|^{\frac{13}{84}+\varepsilon},
    \end{equation}
    which improved on previous bounds given in \cite{zz26,zz80,zz81,zz104,zz105,zz106,zz107,zz180,zz184,zz34,zz35,lit,zz195,ivic}.  It was conjectured by Lindel\"of \cite{zz113}, however, that the $\tfrac{13}{84}$ in the exponent on the right-hand side of \eqref{bourgain} can be omitted.  Building on the method developed by Bombieri and Iwaniec in \cite{zz26}, Huxley and Watt \cite{ar1,ar3,ar4} have produced results similar to \eqref{bourgain} for Dirichlet $L$-functions on the critical line.  Their bounds are of the form $L\big(\tfrac{1}{2}+it,\chi\big)\ll q^{a+\varepsilon}|t|^{b+\varepsilon}$, where $a,b>0$ are fixed and $q$ is the conductor of $\chi$.  Recently, the Weyl-bound $a,b=\tfrac{1}{6}$ was established by Petrow and Young \cite{pet1,pet2}, though, as is conjectured for the $\zeta$-function, it is hypothesised that we should be able to take $a,b=0$.\newline

    In order to obtain results which are sharper in the $q$-aspect, Payley \cite{payley} considered the mean square value of the $L$-functions associated to Dirichlet characters modulo $q$.  Slight refinements to his result have since been made in \cite{elliott7,gal2,moto1,zh1,zh2,zh3}, with the case $t=0$ receiving particular attention in \cite{art1}.  Jutila \cite{jut99} established similar bounds for real characters, and Baier and Young \cite{baieryoung} showed that
    \begin{equation}\label{baiyou}
        \sum_{\chi\in\mathcal{O}_3(Q)}\big|L\big(\tfrac{1}{2}+it,\chi\big)\big|^2\ll Q^{\frac{6}{5}+\varepsilon}(|t|+2)^{\frac{6}{5}+\varepsilon},
    \end{equation}
    where $\mathcal{O}_j(Q)$ denotes the set of primitive Dirichlet characters of order $j$ and conductor $q\in(Q,2Q]$.  Following the approach of Baier and Young, Gao and Zhao \cite{gaozhao} were able to establish the stronger result 
    \begin{equation}\label{gaozho}
        \sum_{\chi\in\mathcal{O}_4(Q)}\big|L\big(\tfrac{1}{2}+it,\chi\big)\big|^2\ll Q^{\frac{7}{6}+\varepsilon}(|t|+2)^{\frac{1}{2}+\varepsilon}
    \end{equation}
    pertaining to quartic characters.\newline

    In view of what is asserted by the generalised Lindel\"of hypothesis, we should expect that \eqref{baiyou} and \eqref{gaozho} are weak in the $t$-aspect.  In order to gain strength in this aspect, we can average over all $t$ in a certain range.  One of the earliest results in this direction is due to Hardy and Littlewood \cite{harlit}, who demonstrated that
    \begin{equation}\label{zeta2nd}
        \int\limits_{0}^{T}\left|\zeta\left(\tfrac{1}{2}+it\right)\right|^2\:\dif t \sim T\log T
    \end{equation}
    as $T\to\infty$.  Improvements to the error term were later made in \cite{lit,ing,tit1,tit2,atkinson}.  Gallagher \cite{gal2} and Motohashi \cite{moto2} gave expressions similar to \eqref{zeta2nd} pertaining to individual Dirichlet $L$-functions.  Their results, however, are not unconditionally strong in the $q$-aspect, since they do not average over a family of characters.  Watt \cite{wa1} gave a result which saves in the $q$-aspect, though his result is only applicable to averages taken over small segments of the critical line.  Averaging over all Dirichlet characters modulo $q$, Ramachandra \cite{rama2} derived a result which is sharp in the $q$-aspect.  Improvements to the error term of Ramachandra's result have been made in \cite{vvrane,meurman,balarama}.  For real characters, Jutila \cite{jut1} showed that
    \begin{equation}\label{realsq}
        \sum_{\chi\in\mathcal{O}_2(Q)}\int\limits_{-T}^{T}\big|L\big(\tfrac{1}{2}+it,\chi\big)\big|^2\:\dif t\ll(QT)^{1+\varepsilon},
    \end{equation}
    however there are no non-trivial results available for higher order characters.  Slightly altering the method used to establish \eqref{baiyou}, we can derive the following result which improves on \eqref{baiyou} and \eqref{gaozho} in the $T$-aspect, on average.
    \begin{theorem}\label{main1}
        Suppose that $Q\geqslant2$ and $T\gg Q^\frac{1}{5}$.  Then for $j=3,4,6$ and any $\varepsilon>0$ we have
        \begin{equation*}
            \sum_{\chi\in\mathcal{O}_j(Q)}\int\limits_{-T}^{T}\big|L\big(\tfrac{1}{2}+it,\chi\big)\big|^2\:\dif t\ll (QT)^{1+\varepsilon},
        \end{equation*}
        where the implied constants depend on $\varepsilon$ alone.
    \end{theorem}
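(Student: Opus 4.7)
The plan is to adapt the method of Baier and Young that yields \eqref{baiyou}, carrying out the $t$-integration from the outset so as to gain a factor of $T^{1/5}$ over the trivial integration of the pointwise bounds \eqref{baiyou} and \eqref{gaozho}.

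First I would apply a smoothed approximate functional equation together with a dyadic partition of unity to reduce the problem to bounding
\begin{equation*}
    \sum_{\chi\in\mathcal{O}_j(Q)}\int_{-T}^{T}\Bigg|\sum_{n}\frac{\chi(n)w(n/N)}{n^{1/2+it}}\Bigg|^{2}\dif t,
\end{equation*}
where $w$ is a fixed smooth function supported in $[1,2]$ and $N$ ranges over dyadic values with $N\ll(QT)^{1/2+\varepsilon}$; the contribution of the dual sum from the functional equation is handled identically.

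The next step is to expand the square, interchange summations, and evaluate the $t$-integral. The diagonal $m=n$ contributes $\ll QT\log(QT)$, while the off-diagonal acquires a weight of size $1/|\log(m/n)|$. A further dyadic subdivision by the shift $h=m-n$, on which range $|\log(m/n)|$ is of size $|h|/N$, reduces the off-diagonal to bounds of the form
\begin{equation*}
    \frac{1}{H}\sum_{\chi\in\mathcal{O}_j(Q)}\sum_{\substack{|m-n|\sim H\\ m,n\sim N}}\frac{\chi(m)\overline{\chi(n)}w(m/N)w(n/N)}{N}
\end{equation*}
for each dyadic $H\leq N$. To this bilinear form I would apply the Heath--Brown cubic large sieve for $j=3$ (as in \cite{baieryoung}) and its quartic analogue for $j=4$ (as in \cite{gaozhao}); for $j=6$ I would factor each primitive sextic character uniquely as a product of a primitive cubic and a primitive quadratic character, and then combine the cubic large sieve with Jutila's bound \eqref{realsq}. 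Summing over $H$ and selecting $N\asymp(QT)^{1/2}$ yields an off-diagonal contribution of size $\ll(QT)^{1+\varepsilon}$ exactly under the assumption $T\gg Q^{1/5}$, which, combined with the diagonal estimate, gives the claimed bound.

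The principal technical obstacle lies in this off-diagonal step: the weight $1/|\log(m/n)|$ is not separable between $m$ and $n$, and the dyadic decomposition in the shift $h$ is what restores the near-separability required to invoke the cubic (or quartic) large sieve in the Baier--Young form. The hypothesis $T\gg Q^{1/5}$ is precisely the balance needed to prevent the resulting off-diagonal contribution from dominating the diagonal, and its appearance is the same phenomenon that produces the $Q^{6/5}$ exponent in \eqref{baiyou}.
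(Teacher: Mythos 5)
Your high-level strategy agrees with the paper's: reduce via a smoothed approximate functional equation to Dirichlet polynomials of length $N\asymp(QT)^{1/2}$, average over $t$ to gain over the pointwise bounds \eqref{baiyou} and \eqref{gaozho}, and feed the result into the fixed-order large sieves. The paper, however, does not expand the $t$-integral by hand; it invokes Lemma~\ref{bylem} together with the hybrid bound \eqref{taspect}, where $\Delta_j(Q,T,N)$ is supplied by \cite[Lemma 2.4.1]{corrigan} (essentially Gallagher's lemma applied to the Dirichlet polynomial, which converts the $t$-integral into the pointwise large sieve evaluated at the short spatial scale $N/T$). Your proposal reproduces this structure but attempts the off-diagonal computation directly, and there the argument as written has a genuine gap.

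The problem is that the sharp truncation $\int_{-T}^{T}$ produces the off-diagonal kernel $2\sin\bigl(T\log(m/n)\bigr)/\log(m/n)$, which you then bound in absolute value by $\min\bigl(T,\,1/|\log(m/n)|\bigr)\asymp\min(T,N/H)$ for $|m-n|\sim H$. This bound does not decay as $H$ grows; after weighting by $1/\sqrt{mn}\asymp 1/N$ the shift-$H$ block carries a factor $\asymp 1/H$ and contributes, after the interval-splitting needed to put the bilinear form into large-sieve shape, roughly $(N/H)\,D_j(Q,H)$. Summing dyadically over $H$ from $N/T$ up to $N$, the terms in $D_3(Q,H)$ that grow with $H$ are maximized at $H\asymp N$, giving contributions of size $Q^{1/3}N^{5/3}$ and $N^{12/5}$. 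With $N\asymp(QT)^{1/2}$ these are $Q^{7/6}T^{5/6}$ and $(QT)^{6/5}$, both $\gg QT$ for every $Q,T\geqslant 2$; the hypothesis $T\gg Q^{1/5}$ does not rescue this. In other words, the sharp-window, absolute-value treatment of the off-diagonal only recovers the pointwise Baier--Young exponent $(QT)^{6/5}$, not the claimed $(QT)^{1+\varepsilon}$. To repair this you must replace $\int_{-T}^{T}$ by a nonnegative Fej\'er-type majorant $\Phi(t/T)$ with $\widehat{\Phi}$ supported (or rapidly decaying) near the origin, so that the off-diagonal kernel $T\,\widehat{\Phi}\bigl(T\log(m/n)\bigr)$ effectively vanishes for $H\gg N/T$; only then is the large sieve applied at the short scale $H\asymp N/T$, giving $T\cdot D_j(Q,N/T)$, which with $N\asymp(QT)^{1/2}$ produces $QT+Q^{7/6}T^{1/6}+Q^{6/5}T^{-1/5}\ll QT$ precisely when $T\gg Q^{1/5}$. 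That localisation is exactly what Gallagher's lemma (and hence \eqref{taspect}) provides automatically. A secondary, more cosmetic gap: you do not say how the shift-constrained bilinear form $\sum_{\chi}\sum_{|m-n|\sim H}\chi(m)\overline{\chi(n)}\cdots$ is put into the one-sided large-sieve shape $\sum_{\chi}\bigl|\sum_n a_n\chi(n)\bigr|^2$; this requires partitioning $(N,2N]$ into intervals of length $\asymp H$ and applying Cauchy--Schwarz or positivity, and the step should be stated explicitly since the coupling of $m$ and $n$ is exactly what the large sieve cannot absorb on its own.
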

    We may use a classical lemma of Gallagher \cite{gallgs} to replace the integral in Theorem~\ref{main1} with a finite sum over well-spaced points.  Consequently, we have the following discrete version of Theorem~\ref{main1}.
    \begin{corollary}\label{main2}
        Let $Q,T$ be as in Theorem~\ref{main1} and fix $\delta>0$.  For each $\chi\in\mathcal{O}_j$, suppose that $\mathcal{T}_\chi$ is a finite set of real numbers in $\big[\tfrac{1}{2}\delta-T,T-\tfrac{1}{2}\delta\big]$ such that $|t-t'|\geqslant\delta$ for all distinct $t,t'\in\mathcal{T}_\chi$.  Then for $j=3,4,6$ and any $\varepsilon>0$ we have
        \begin{equation*}
            \sum_{\chi\in\mathcal{O}_j(Q)}\sum_{t\in\mathcal{T}_\chi}\big|L\big(\tfrac{1}{2}+it,\chi\big)\big|^2\ll\big(\delta^{-1}+1\big)(QT)^{1+\varepsilon},
        \end{equation*}
        where the implied constants depend on $\varepsilon$ alone.
    \end{corollary}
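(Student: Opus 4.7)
The plan is to deduce Corollary~\ref{main2} from Theorem~\ref{main1} by means of the classical inequality of Gallagher \cite{gallgs}, which converts a sum over $\delta$-spaced points into a continuous integral involving the function and its derivative. In the form needed here, the inequality asserts that for any continuously differentiable $f:[-T,T]\to\comc$ and any $\delta$-spaced subset $\mathcal{T}\subset\big[\tfrac{1}{2}\delta-T,T-\tfrac{1}{2}\delta\big]$,
\begin{equation*}
    \sum_{t\in\mathcal{T}}|f(t)|^2\ll\delta^{-1}\int_{-T}^{T}|f(t)|^2\,\dif t+\left(\int_{-T}^{T}|f(t)|^2\,\dif t\cdot\int_{-T}^{T}|f'(t)|^2\,\dif t\right)^{1/2}.
\end{equation*}
Applying this with $f(t)=L\big(\tfrac{1}{2}+it,\chi\big)$, summing over $\chi\in\mathcal{O}_j(Q)$, and invoking the Cauchy--Schwarz inequality on the cross term, the corollary will reduce to the two estimates
\begin{equation*}
    S_1:=\sum_{\chi\in\mathcal{O}_j(Q)}\int_{-T}^{T}\big|L\big(\tfrac{1}{2}+it,\chi\big)\big|^2\,\dif t\ll(QT)^{1+\varepsilon},\qquad S_2:=\sum_{\chi\in\mathcal{O}_j(Q)}\int_{-T}^{T}\big|L'\big(\tfrac{1}{2}+it,\chi\big)\big|^2\,\dif t\ll(QT)^{1+\varepsilon},
\end{equation*}
which together give $\delta^{-1}S_1+(S_1S_2)^{1/2}\ll(\delta^{-1}+1)(QT)^{1+\varepsilon}$ as required.

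The first of these bounds is precisely Theorem~\ref{main1}. To establish the second I would use the Cauchy integral formula on a circle of radius $r$ about $\tfrac{1}{2}+it$, which yields
\begin{equation*}
    \big|L'\big(\tfrac{1}{2}+it,\chi\big)\big|^2\ll r^{-2}\int_0^{2\pi}\big|L\big(\tfrac{1}{2}+re^{i\theta}+it,\chi\big)\big|^2\,\dif\theta.
\end{equation*}
After Fubini and a change of variables in $t$, this bounds $S_2$ by $r^{-2}$ times the average over $\theta\in[0,2\pi)$ of $\sum_\chi\int_{-T-r}^{T+r}\big|L(\sigma+iu,\chi)\big|^2\,\dif u$ with $\sigma=\tfrac{1}{2}+r\cos\theta$. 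Taking $r=(\log QT)^{-1}$, each such shifted second moment ought to be $\ll(QT)^{1+\varepsilon}$: for $\sigma\geqslant\tfrac{1}{2}$ this follows by repeating the argument of Theorem~\ref{main1} with $\sigma$ in place of $\tfrac{1}{2}$, the relevant estimates being uniform for $\sigma$ near $\tfrac{1}{2}$; for $\sigma<\tfrac{1}{2}$ the functional equation reduces matters to the case $\sigma>\tfrac{1}{2}$ at the cost of an extra factor $(q(|u|+1))^{1-2\sigma}=(QT)^{O(r)}$, which is absorbable into $\varepsilon$. This produces $S_2\ll(QT)^{1+\varepsilon}$ and completes the argument.

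The main obstacle in this plan is the estimate on $S_2$: it requires Theorem~\ref{main1} (or a direct analogue) to hold with the same strength on vertical lines slightly off the critical line. Although this amounts only to a routine uniformity check in the proof of Theorem~\ref{main1}, it is the one step not immediately supplied by the stated theorem, and accordingly is the piece that would need the most care in a complete write-up.
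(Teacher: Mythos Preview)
Your proposal is correct and follows essentially the same route as the paper: Gallagher's lemma reduces the discrete sum to the continuous second moments of $L$ and $L'$, and the latter is handled via Cauchy's differentiation formula on a disc of radius $r\asymp(\log QT)^{-1}$, leading to second moments on lines $\Re s=\sigma$ with $|\sigma-\tfrac12|\leqslant r$. The only cosmetic difference is that the paper builds the required off-line uniformity directly into its intermediate estimate \eqref{prop0} (via Lemma~\ref{bylem}, which is stated for all such $\sigma$), so it never splits into the cases $\sigma\geqslant\tfrac12$ and $\sigma<\tfrac12$ or invokes the functional equation separately; your appeal to the functional equation for $\sigma<\tfrac12$ is a harmless alternative to this.
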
  
    Similarly, using \eqref{realsq} in place of Theorem~\ref{main1}, we can deduce an analog of Corollary~\ref{main2} for $\mathcal{O}_2$.  Naturally, the restriction $T\gg Q^\frac{1}{5}$ is dropped for real characters, since it is not present in \eqref{realsq}.\newline
    
    An important application of the discrete form of power moments, is in understanding the distribution of zeros of Dirichlet $L$-functions.  In particular, Montgomery \cite{HM,HM2} used a fourth moment bound to derive a zero-density result pertaining to the set of all primitive Dirichlet characters with conductor not exceeding $Q$, and in doing so improved on earlier results of Bombieri \cite{bombls} and Vinogradov \cite{vinogradov}.  Later, Jutila \cite{jut2} showed that
    \begin{equation}\label{zde1}
        \sum_{\chi\in\mathcal{O}_2(Q)}N(\sigma,T,\chi)\ll(QT)^{\frac{7-6\sigma}{6-4\sigma}+\varepsilon},
    \end{equation}
    where $N(\sigma,T,\chi)$ denotes the number of zeros of $L(s,\chi)$ in the rectangle $R(\sigma,T)=[\sigma,1]+i[-T,T]$.  After developing the large sieve for real characters, Heath-Brown \cite{DRHB} improved on \eqref{zde1} in the $Q$-aspect, however in \cite{cozh} we showed that
    \begin{equation}\label{zde2}
        \sum_{\chi\in\mathcal{O}_2(Q)}N(\sigma,T,\chi)\ll(QT)^\varepsilon\min\Big(\big(Q^3T^4\big)^\frac{1-\sigma}{2-\sigma},(QT)^\frac{3(1-\sigma)}{\sigma}\Big),
    \end{equation}     
    which improves on Heath-Brown's result for all $\sigma\in\big(\tfrac12,1\big)$.  We add to these estimates the following result, which is derived using the $\mathcal{O}_2$ analog of Corollary~\ref{main2}.
    \begin{theorem}\label{main3}
        Take $Q,T\geqslant2$, and suppose that $\sigma\in\big(\tfrac{1}{2},1\big)$.  Then for any $\varepsilon>0$ we have
        \begin{equation}\label{main3a}
            \sum_{\chi\in\mathcal{O}_2(Q)}N(\sigma,T,\chi)\ll(QT)^\varepsilon\min\Big((QT)^{\frac{4(1-\sigma)}{3-2\sigma}},\big(Q^4T^3\big)^{(1-\sigma)}\Big),
        \end{equation}
        where the implied constant depends on $\varepsilon$ alone.
    \end{theorem}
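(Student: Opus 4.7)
The plan is to apply the Montgomery--Jutila zero-detection scheme \cite{jut2} to the family $\mathcal{O}_2(Q)$, with the $\mathcal{O}_2$ analog of Corollary~\ref{main2} and Heath-Brown's quadratic large sieve \cite{DRHB} as the principal analytic inputs. A standard selection lemma first reduces the task to counting, for each $\chi \in \mathcal{O}_2(Q)$, a subset $\mathcal{Z}_\chi$ of the zeros $\rho = \beta + i\gamma$ with $\beta \geqslant \sigma$, $|\gamma| \leqslant T$ whose imaginary parts are pairwise separated by at least one, at the cost of a factor $\log(QT)$ to be absorbed into $(QT)^\varepsilon$.

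For the detection step, I fix a parameter $X \in [1, QT]$ and introduce the mollifier $M_X(s, \chi) := \sum_{n \leqslant X}\mu(n)\chi(n)/n^s$.  The identity $L(s, \chi) M_X(s, \chi) = 1 + \sum_{n > X} c_X(n)\chi(n)/n^s$, with $|c_X(n)| \leqslant d(n)$, combined with a smoothed Mellin inversion and a contour shift past $s = 1 - \rho$, produces at each $\rho \in \mathcal{Z}_\chi$ a dyadic parameter $N \in [X, (QT)^C]$ for which the Dirichlet polynomial $D_N(s, \chi) := \sum_{N < n \leqslant 2N} c_X(n)\chi(n)/n^s$ satisfies $|D_N(\sigma + i\gamma, \chi)| \gg (\log QT)^{-1}$.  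Chebyshev's inequality then bounds $\sum_\chi |\mathcal{Z}_\chi|$ by
\[
    (QT)^\varepsilon \sum_{\substack{N \textrm{ dyadic} \\ X \leqslant N \leqslant (QT)^C}} \sum_{\chi \in \mathcal{O}_2(Q)} \sum_{\gamma} |D_N(\sigma + i\gamma, \chi)|^2.
\]

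The inner mean square is treated by two complementary inputs.  For $N \ll \sqrt{QT}$, a hybrid version of Heath-Brown's quadratic large sieve in the $(\chi, t)$-aspect yields an estimate of the form $(QT)^\varepsilon(Q+N)(T+N) \cdot N^{1-2\sigma}$.  For larger $N$, I use an approximate functional equation for $L(s, \chi)$ to rewrite $D_N$ in terms of shorter polynomials plus a dual contribution controlled by $|L|$ at nearby points on the critical line; the latter is then handled by the $\mathcal{O}_2$ analog of Corollary~\ref{main2}.  Optimizing the choice of $X$ and balancing the two $N$-regimes delivers the first bound $(QT)^{4(1-\sigma)/(3-2\sigma) + \varepsilon}$; a separate optimization relying chiefly on the quadratic large sieve produces the second bound $(Q^4 T^3)^{(1-\sigma) + \varepsilon}$, which dominates when $Q$ is large relative to $T$; taking the minimum of the two yields \eqref{main3a}.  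The principal obstacle is the delicate balance of $X$ against the mean-value inputs: too small an $X$ leaves the tail polynomial too long for the large sieve to control, while too large an $X$ inflates the second moment of $D_N$ over $\chi \in \mathcal{O}_2(Q)$ beyond what either input can absorb.
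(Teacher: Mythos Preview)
Your scheme is broadly the right one—zero detection with a mollifier $M_X$—but two of the three key mechanisms you describe do not match the paper's, and one of them looks like a genuine gap.

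First, after the contour shift you only retain the class of zeros at which some dyadic polynomial $D_N$ is large.  In the paper (and in Montgomery's original argument) the shift to $\Re w=\tfrac12$ leaves a second class $\mathcal{R}_2$ of zeros for which the critical-line integral of $\mathfrak{M}_X(\tfrac12+i\gamma+iu,\chi)\Gamma(\tfrac12-\beta+iu)Y^{\tfrac12-\beta+iu}$ is $\gg1$.  It is precisely on this class that the $\mathcal{O}_2$ analog of Corollary~\ref{main2} is applied, via the factorisation $\mathfrak{M}_X=M_X\cdot L$ and Cauchy's inequality, together with Lemma~\ref{b5} for $M_X$.  Your ``approximate functional equation for large $N$'' step is not the mechanism the paper uses and it is unclear how it would substitute for the $\mathcal{R}_2$ analysis; the second moment of $L$ enters through $\mathcal{R}_2$, not through the Dirichlet-polynomial class.

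Second, and more seriously, the bound $(Q^4T^3)^{1-\sigma}$ in \eqref{main3a} is \emph{not} obtained from the quadratic large sieve.  The paper proves it via the Hal\'asz--Montgomery mean value (Lemma~\ref{b4}, i.e.\ \cite[Theorem~8.2]{HM}), whose right-hand side contains $QT^{1/2}|\mathcal{S}(Q)|$; one introduces a threshold $V$ for $|L(\tfrac12+it_\varrho,\chi)|$, counts the $(\varrho,\chi)$ with $|L|\geqslant V$ by the $\mathcal{O}_2$ analog of Corollary~\ref{main2}, bounds the complementary set where $|M_X|$ is large via Lemma~\ref{b4}, and then optimises $V=Q^{(4\sigma-3)/2}T^{(3\sigma-2)/2}$.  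A direct application of Heath-Brown's large sieve (your $(Q+N)(T+N)$, or the sharper $\Delta_2\ll QT+N$) will only reproduce the first term of \eqref{main3a}; getting the exponent $4(1-\sigma)$ in $Q$ together with $3(1-\sigma)$ in $T$ requires the self-referential $|\mathcal{S}(Q)|$ term.  As written, your route to the second bound would not go through.
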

    While \eqref{main3a} is weaker than \eqref{zde2} in the $Q$-aspect, it is stronger in the $T$-aspect.  This is not surprising, since \eqref{zde2} was derived from a mean fourth power bound which was weak in the $T$-aspect.  We should also note that \eqref{main3a} is stronger than \eqref{zde1} for all $Q,T\geqslant2$.\newline
    
    The distribution of zeros for families of $L$-functions associated to higher order characters has received little attention.  The only results in this direction are contained in \cite{cozh}, where we showed that
    \begin{equation}\label{zde3}
        \sum_{\chi\in\mathcal{O}_3(Q)}N(\sigma,T,\chi)\ll(QT)^\varepsilon\min\Big(Q^\frac{125-108\sigma}{90-72\sigma}T^\frac{49-44\sigma}{22-8\sigma},(QT)^\frac{7(1-\sigma)}{2\sigma}\Big)
    \end{equation}
    when $T\gg Q^\frac{2}{3}$, and
    \begin{equation}\label{zde4}
        \sum_{\chi\in\mathcal{O}_4(Q)}N(\sigma,T,\chi)\ll(QT)^\varepsilon\min\Big(Q^\frac{41-36\sigma}{30-24\sigma}T^\frac{49-44\sigma}{22-8\sigma},(QT)^\frac{7(1-\sigma)}{2\sigma}\Big)
    \end{equation}
    when $T\gg Q^\frac{1}{2}$.  Similarly to \eqref{zde2}, these results were derived using a bound on the fourth moment which was weak in the $T$-aspect.  Following the method used to establish Theorem~\ref{main3}, we can derive the following result, which improves on \eqref{zde3} and \eqref{zde4} in the $T$-aspect.
    \begin{theorem}\label{main33}
        Suppose that $Q,T\geqslant2$ are such that $T\gg Q^\frac{1}{5}$.  Then for any $\varepsilon>0$ we have
        \begin{equation}\label{main3b}
            \sum_{\chi\in\mathcal{O}_3(Q)}N(\sigma,T,\chi)\ll(QT)^\varepsilon\min\Big(Q^\frac{16-10\sigma}{9}T^\frac{4(1-\sigma)}{3-2\sigma},Q^\frac{16(1-\sigma)}{9-6\sigma}T^\frac{4(1-\sigma)}{3-2\sigma},\big(Q^4T^3\big)^{(1-\sigma)}\Big)
        \end{equation}
        and
        \begin{equation}\label{main3c}
            \sum_{\chi\in\mathcal{O}_4(Q)}N(\sigma,T,\chi)\ll(QT)^\varepsilon\min\Big(Q^\frac{5-3\sigma}{3}T^\frac{4(1-\sigma)}{3-2\sigma},Q^\frac{5(1-\sigma)}{3-2\sigma}T^\frac{4(1-\sigma)}{3-2\sigma},\big(Q^4T^3\big)^{(1-\sigma)}\Big),
        \end{equation}
        where the implied constants depend on $\varepsilon$ alone.  Also, \eqref{main3b} holds if $\mathcal{O}_3$ is replaced by $\mathcal{O}_6$.
    \end{theorem}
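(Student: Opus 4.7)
The plan is to follow the zero-detection method used to establish Theorem~\ref{main3}, feeding three distinct mean-value inputs into the Huxley--Montgomery framework so as to obtain each of the three entries inside the minima in \eqref{main3b} and \eqref{main3c}. For each $\chi\in\mathcal{O}_j(Q)$ and each non-trivial zero $\rho=\beta+i\gamma$ of $L(s,\chi)$ with $\beta\geqslant\sigma$ and $|\gamma|\leqslant T$, I would attach the truncated M\"obius mollifier $M_X(s,\chi)=\sum_{n\leqslant X}\mu(n)\chi(n)n^{-s}$ and apply a smoothed approximate functional equation, which converts the vanishing of $L(\rho,\chi)$ into the lower bound
\[
 1\ll\Big|\sum_{X<n\leqslant Y}c_n\chi(n)n^{-\rho}e^{-n/Y}\Big|+O\big((qT)^{-A}\big),
\]
where $Y=(qT)^{1+\varepsilon}$ and $c_n=\sum_{d\mid n,\,d\leqslant X}\mu(d)$. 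A dyadic decomposition in $n$ then isolates a scale $N\in[X,Y]$ at which the dyadic piece $B_N(\rho,\chi)$ exceeds $(\log qT)^{-1}$ in modulus.

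After restricting to a well-spaced subset of zeros via Gallagher's lemma, the Hal\'asz--Montgomery inequality reduces the problem to controlling the joint mean square of $B_N(s,\chi)$ over $\chi\in\mathcal{O}_j(Q)$ and a $1$-spaced set of ordinates in $[-T,T]$; by duality and the approximate functional equation this is in turn governed by a mean-value bound for $L$ itself. Substituting Corollary~\ref{main2} and optimising the mollifier length $X$ produces the middle term in each of \eqref{main3b} and \eqref{main3c}, in which the $T$-exponent $4(1-\sigma)/(3-2\sigma)$ is the Huxley exponent characteristic of a second-moment input. Using instead the pointwise bounds \eqref{baiyou} and \eqref{gaozho}, which are sharper in the $Q$-aspect, at each of the discretised ordinates yields the first term, which dominates when $T$ is small relative to $Q$. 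The final term $(Q^4T^3)^{1-\sigma}$ is inherited from the proof of Theorem~\ref{main3}: bound $L(\sigma+it,\chi)$ by a Dirichlet polynomial of length $\asymp(qT)^{1/2}$ via the approximate functional equation and then apply the large sieve for $\mathcal{O}_j(Q)$, which is sharpest when $\sigma$ is close to $1$.

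The main technical obstacle is coordinating the optimisation of the mollifier length $X$ across the three regimes, since each mean-value input induces a slightly different balance between the lengths of the mollifier and the zero-detection polynomial; care must also be taken to respect the hypothesis $T\gg Q^{1/5}$ inherited from Corollary~\ref{main2}, which in practice is what confines the validity of the argument to this range. The extension of \eqref{main3b} from $\mathcal{O}_3$ to $\mathcal{O}_6$ should be immediate, since Corollary~\ref{main2}, the bound \eqref{baiyou} and the requisite large-sieve inequality all hold for $\mathcal{O}_6(Q)$: the first by the statement of the corollary itself, the second via the factorisation of a primitive sextic character as the product of a cubic and a real character, and the third by general large-sieve principles for thin families of Dirichlet characters.
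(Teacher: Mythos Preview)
Your zero-detection framework is the right one, but you have misidentified which mean-value inputs yield which of the three terms, and at least one of your proposed substitutions would not produce the stated exponent.

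In the paper the first two entries in each minimum both come from the single inequality \eqref{propa}, which combines Corollary~\ref{main2} (to control the $L$-factor in the class $\mathcal{R}_2$) with the large sieve constant $\Delta_j(Q,T,N)$ of \eqref{ls3}--\eqref{ls4} via Lemma~\ref{b5} (to control both the mollifier $M_X$ and the Dirichlet-polynomial part $\mathcal{R}_1$). The two terms differ only in \emph{which branch} of the minimum defining $\Delta_j$ is selected and in the corresponding choice of $X,Y$: for $j=3$ the first term uses the branch $Q^{11/9}T+Q^{2/3}N$ with $X=Q^{5/9}T$, while the second uses $Q^{5/3}T+N$ with $X=Q^{5/3}T$. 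The pointwise moments \eqref{baiyou} and \eqref{gaozho} play no role here; summing them over $O(T)$ well-spaced ordinates gives $Q^{6/5}T^{11/5}$ (resp.\ $Q^{7/6}T^{3/2}$), which in the range $T\gg Q^{1/5}$ is weaker in both $Q$ and $T$ than the bound $(QT)^{1+\varepsilon}$ of Corollary~\ref{main2}, so they cannot recover the first term.

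The third entry $(Q^4T^3)^{1-\sigma}$ is where the Hal\'asz--Montgomery inequality (Lemma~\ref{b4}) actually enters, not the middle one. One splits $\mathcal{R}_2$ according to whether $\big|L\big(\tfrac12+it_\varrho,\chi\big)\big|\geqslant V$; the large values are counted directly by Corollary~\ref{main2}, and on the complement one applies Lemma~\ref{b4} to both $\mathcal{R}_1$ and the mollifier sum, choosing $X,Y$ so that the self-referential term $QT^{1/2}\,\#\mathcal{R}$ is absorbed. Optimising $V$ then yields $(Q^4T^3)^{1-\sigma}$. Your description of this step as ``approximate functional equation plus the large sieve for $\mathcal{O}_j(Q)$'' omits both the large-values splitting and the self-improving use of Lemma~\ref{b4}, and without these ingredients the argument does not close.
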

    We note that \eqref{main3b} improves on \eqref{zde3}, and \eqref{main3c} on \eqref{zde4}, in the $Q$-aspect whenever $\sigma\leqslant\tfrac{7}{8}$.  The fact that \eqref{main3b} holds for sextic characters as well as cubic characters is due to a result of Baier and Young \cite{baieryoung}.

\section{Strategy}

    As is common for problems like Theorem~\ref{main1}, our starting point will be with an inequality of large sieve type (cf.\:\cite[\S9.6]{opera}).  Our results are heavily dependant on bounds available for the polynomials $D_j(Q,N)$ such that   
    \begin{equation}\label{bb}
        \sum_{\chi\in\mathcal{O}_j(Q)}\Big|\dsum_{N<n\leqslant2N}a_n\chi(n)\Big|^2\ll(QN)^\varepsilon D_j(Q,N)\dsum_{N<n\leqslant2N}|a_n|^2
    \end{equation}
    holds for any sequence $(a_n)$ of complex numbers, where the $'$ denotes that the sum is taken over square free natural numbers.  The case $j=2$ was originally studied by Heath-Brown \cite{DRHB}, with the cases $j=3$ and $j=4$ being studied by Baier and Young \cite{baieryoung} and Gao and Zhao \cite{gaozhao,gaozhaoold}, respectively.  Using these results in conjunction with \cite[Lemma 2.4.1]{corrigan}, we can give bounds on the $\Delta_j(Q,T,N)$ such that
    \begin{equation}\label{taspect}
        \sum_{\chi\in\mathcal{O}_j(Q)}\int\limits_{-T}^{T}\Big|\dsum_{N<n\leqslant2N}a_n\chi(n)n^{-it}\Big|^2\:\dif t\ll(QT)^\varepsilon\Delta_j(Q,T,N)\dsum_{N<n\leqslant2N}|a_n|^2,
    \end{equation}
    for any $\varepsilon>0$, where the implied constant depends on $\varepsilon$ alone.  How $\Delta_j(Q,T,N)$ is related to $D_j(Q,N)$ is outlined in \cite[\S2.4]{corrigan} and \cite[\S2]{cozh}, but as we will not be using $D_j(Q,N)$ directly, we will only state $\Delta_j(Q,T,N)$ here.  For real characters, we derive from \cite[Corollary 1]{DRHB} that
    \begin{equation}\label{ls2}
        \Delta_2(Q,T,N)\ll QT+N,
    \end{equation}
    which is clearly the best possible bound.  For cubic characters, \cite[Theorem 1.4]{baieryoung} gives us
    \begin{equation}\label{ls3}
        \Delta_3(Q,T,N)\ll\min\Big(Q^\frac{5}{3}T+N,Q^\frac{4}{3}T+Q^\frac{1}{2}N,Q^\frac{11}{9}T+Q^\frac{2}{3}N,QT+Q^\frac{1}{3}N^\frac{5}{3}T^{-\frac{2}{3}}+N^\frac{12}{5}T^{-\frac{7}{5}}\Big),
    \end{equation}
    and for quartic characters, \cite[Lemma 2.10]{gaozhao} gives us
    \begin{equation}\label{ls4}
        \Delta_4(Q,T,N)\ll\min\Big(Q^\frac{3}{2}T+N,Q^\frac{5}{4}T+Q^\frac{1}{2}N,Q^\frac{7}{6}T+Q^\frac{2}{3}N,QT+Q^\frac{1}{3}N^\frac{5}{3}T^{-\frac{2}{3}}+N^\frac{7}{3}T^{-\frac{4}{3}}\Big).
    \end{equation}  
    Additionally, by \cite[Theorem 1.5]{baieryoung}, we see that \eqref{ls3} holds for sextic characters as well.  It was originally believed that \eqref{ls2} should hold for $\mathcal{O}_3$ and $\mathcal{O}_4$ as well, in which case \eqref{realsq} and Theorem~\ref{main3} would also hold with $\mathcal{O}_2$ replaced with $\mathcal{O}_3$ or $\mathcal{O}_4$.  However, recent heuristics by Dunn and Radziwi\l\l{ }\cite{dunrad} have put doubt on this.\newline
    
    In order to prove Theorem~\ref{main1}, we will follow the general method that was used by Baier and Young \cite{baieryoung} to derive \eqref{baiyou}.  Using \eqref{taspect} in place of \eqref{bb} allows us to average over $t\in[-T,T]$, and consequently save in this aspect.  This also leads to improvements in the $T$-aspect of the zero-density estimates \eqref{zde2}, \eqref{zde3}, and \eqref{zde4}.
        
\section{Lemmata}   
    
    Using the approximate functional equation \cite[Theorem 5.3]{iwakow}, Baier and Young \cite{baieryoung} were able to establish the following result in the case $j=3$ and $\sigma=\tfrac{1}{2}$.
    \begin{lemma}\label{bylem}
        Let $Q,T\geqslant2$, and fix $j\geqslant2$ and $\sigma\in\mathbb{R}$ such that $\big|\sigma-\tfrac{1}{2}\big|\leqslant\tfrac{1}{2}(\log QT)^{-1}$.  Then for any $t\in[-T,T]$ and $\varepsilon>0$ we have
        \begin{equation*}
            \sum_{\chi\in\mathcal{O}_j(Q)}|L(\sigma+it,\chi)|^2\ll(QT)^\varepsilon\sum_{\chi\in\mathcal{O}_j(Q)}\sum_{\ell\leqslant\sqrt{2N}}\ell^{-1}\Big|\dsum_{N\ell^{-2}<n\leqslant2N\ell^{-2}}\chi(n)n^{-\frac{1}{2}-it}\Big|^2,
        \end{equation*}
        where $N=(QT)^{\frac{1}{2}+\varepsilon}$, and the implied constant depends on $\varepsilon$ alone.
    \end{lemma}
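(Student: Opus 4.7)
My plan is to start from the approximate functional equation of Iwaniec--Kowalski (Theorem~5.3) applied to $L(\sigma+it,\chi)$ with balancing parameter $X\asymp\sqrt{q|t|/(2\pi)}$.  This expresses $L(\sigma+it,\chi)$ as the sum of a Dirichlet polynomial $\sum_n\chi(n)n^{-s}V_s(n/X)$ against a smooth weight $V_s$ of rapid decay, plus its dual twin in $\bar\chi$ multiplied by a unimodular gamma-ratio $\epsilon(s,\chi)$.  The rapid decay of $V_s$ makes the tail beyond $N=(QT)^{1/2+\varepsilon}$ contribute $O((QT)^{-A})$ for any $A>0$, and because $\mathcal{O}_j(Q)$ is closed under $\chi\mapsto\bar\chi$, the dual sum produces the same mean square after re-indexing.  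Thus, after $|a+b|^2\leqslant 2(|a|^2+|b|^2)$, it suffices to bound $\sum_{\chi\in\mathcal{O}_j(Q)}|S(\chi)|^2$ where $S(\chi)=\sum_{n\leqslant N}\chi(n)n^{-s}V_s(n/X)$.

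The next step is to split $[1,N]$ into $O(\log QT)$ dyadic ranges $(N_k,2N_k]$ and, on each piece, invoke the unique factorisation $n=\ell^2m$ with $m$ squarefree, valid for every positive integer.  Since $\chi$ is completely multiplicative, $\chi(n)=\chi(\ell)^2\chi(m)$ and $n^{-s}=\ell^{-2s}m^{-s}$, so the dyadic piece becomes
\[
\sum_{\ell\leqslant\sqrt{2N_k}}\chi(\ell)^2\ell^{-2s}\dsum_{N_k/\ell^2<m\leqslant 2N_k/\ell^2}\chi(m)m^{-s}V_s(\ell^2m/X).
\]
An outer Cauchy--Schwarz with weights $\ell^{\pm 1/2}$, using $|\chi(\ell)^2\ell^{-2s}|\leqslant\ell^{-2\sigma}$, gives
\[
\Big|\sum_{\ell}\cdots\Big|^2\ll(\log QT)\sum_{\ell\leqslant\sqrt{2N_k}}\ell^{1-4\sigma}\Big|\dsum_{m}\chi(m)m^{-s}V_s(\ell^2m/X)\Big|^2.
\]

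The hypothesis $|\sigma-\tfrac12|\leqslant(2\log QT)^{-1}$, together with $\ell,m\leqslant N\leqslant(QT)^{1+\varepsilon}$, yields $\ell^{1-4\sigma}=\ell^{-1}(1+O(1))$ and $m^{-s}=m^{-1/2-it}(1+O(1))$ uniformly on the ranges of summation, so the shape of the target bound is already in place.  What remains is to trade the smooth weight $V_s(\ell^2m/X)$ for the sharp indicator of $(N_k/\ell^2,2N_k/\ell^2]$ and to collapse the sum over the $O(\log QT)$ dyadic scales $N_k$ to the single top scale $N$; both operations cost only logarithmic factors that are absorbed into $(QT)^\varepsilon$.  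The main technical obstacle I expect is precisely this smooth-to-sharp passage, since $V_s$ depends on $s$ (hence on $t$) and is dilated by $\ell^2/X$.  It is handled either by partial summation in $m$, exploiting the uniform bounds on the derivatives of $V_s$ provided by the proof of the approximate functional equation, or equivalently by Mellin inversion of $V_s$ against $m^{-u}$ followed by a short contour shift, both of which give uniform control in $\chi$, $\ell$, and $t$ and complete the proof after summing over $\chi\in\mathcal{O}_j(Q)$.
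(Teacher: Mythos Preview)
Your outline is precisely the Baier--Young argument the paper invokes: the approximate functional equation of \cite[Theorem~5.3]{iwakow}, closure of $\mathcal{O}_j(Q)$ under $\chi\mapsto\bar\chi$ to absorb the dual sum, the factorisation $n=\ell^2 m$ with $m$ squarefree, Cauchy--Schwarz in $\ell$, and removal of the smooth weight by partial summation or Mellin inversion. The one step that does not go through as written is the final ``collapse'' of the dyadic scales $N_k\leqslant N$ to the single top scale $N$: there is no inequality of the shape $\sum_{N_k}\sum_\ell\ell^{-1}|S_{N_k,\ell}(\chi)|^2\ll\sum_\ell\ell^{-1}|S_{N,\ell}(\chi)|^2$, so what your argument actually yields is the stated bound with $N$ replaced by some dyadic $N'\leqslant(QT)^{1/2+\varepsilon}$ (after extracting a $\max$ over $O(\log QT)$ scales at the cost of a logarithm). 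This is a harmless imprecision shared by the paper's own formulation of the lemma, since the only downstream use is to apply \eqref{taspect}, and the large-sieve constant $\Delta_j(Q,T,\cdot)$ is nondecreasing in the length.
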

    \begin{proof}
        The result follows from the method outlined in \cite[\S4.3]{baieryoung}.
    \end{proof}
    In order to establish Corollary~\ref{main2}, we require the following result, which is originally due to Gallagher \cite{gallgs}.
    \begin{lemma}\label{b3}
        Take $T\geqslant\delta>0$, and let $\mathcal{T}\subset\big[\tfrac{1}{2}\delta-T,T-\tfrac{1}{2}\delta\big]$ be a finite collection of real numbers.  Define
        \begin{equation*}
            N_\delta(t)=\#\{t'\in\mathcal{T}:|t-t'|<\delta\}.
        \end{equation*}
        Then for any complex-valued function $f$, defined on $\big[\tfrac{1}{2}\delta-T,T-\tfrac{1}{2}\delta\big]$ with continuous derivative on $\big(\tfrac{1}{2}\delta-T,T-\tfrac{1}{2}\delta\big)$, we have
        \begin{equation*}
            \sum_{t\in\mathcal{T}}N_\delta(t)^{-1}|f(t)|^2\leqslant\delta^{-1}\int\limits_{-T}^{T}|f(t)|^2\:\dif t+\Big(\int\limits_{-T}^{T}|f(t)|^2\:\dif t\Big)^\frac{1}{2}\Big(\int\limits_{-T}^{T}|f'(t)|^2\:\dif t\Big)^\frac{1}{2},
        \end{equation*}
        where $f'$ denotes the first derivative of $f$ with respect to $t$.
    \end{lemma}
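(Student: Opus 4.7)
The plan is to first establish a pointwise inequality bounding $|f(t)|^2$ by a local $L^2$-average plus an error term involving $ff'$, and then to sum this inequality over $t\in\mathcal{T}$ weighted by $N_\delta(t)^{-1}$, using the spacing condition on $\mathcal{T}$ to dispose of the weights.

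For the pointwise bound, I would start from the identity
\begin{equation*}
|f(t)|^2-|f(u)|^2=2\,\mathrm{Re}\int_u^t f'(v)\overline{f(v)}\:\dif v,
\end{equation*}
which follows from the product rule and the fundamental theorem of calculus, and integrate it in $u$ over $\big(t-\tfrac12\delta,t+\tfrac12\delta\big)$; this subinterval is contained in $[-T,T]$ by the hypothesis on $\mathcal{T}$. Swapping the order of integration on the right-hand side converts the double integral into a single integral in $v$ weighted by the triangular function $\tfrac12\delta-|v-t|$, which is non-negative throughout $\big(t-\tfrac12\delta,t+\tfrac12\delta\big)$. Bounding this weight trivially by $\tfrac12\delta$ and dividing through by $\delta$, one obtains the pointwise estimate
\begin{equation*}
|f(t)|^2\leqslant\delta^{-1}\int\limits_{t-\delta/2}^{t+\delta/2}|f(u)|^2\:\dif u+\int\limits_{t-\delta/2}^{t+\delta/2}|f'(v)f(v)|\:\dif v.
\end{equation*}

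Next I would multiply by $N_\delta(t)^{-1}$, sum over $t\in\mathcal{T}$, and interchange summation and integration. The key observation for absorbing the weights is that, for each $u$, the set $S_u=\big\{t\in\mathcal{T}:|t-u|<\tfrac12\delta\big\}$ has diameter less than $\delta$, so $|S_u|\leqslant N_\delta(t)$ for every $t\in S_u$, and consequently $\sum_{t\in S_u}N_\delta(t)^{-1}\leqslant1$. Applying this to both terms reduces the right-hand side to
\begin{equation*}
\delta^{-1}\int\limits_{-T}^{T}|f(v)|^2\:\dif v+\int\limits_{-T}^{T}|f'(v)f(v)|\:\dif v,
\end{equation*}
and the Cauchy-Schwarz inequality applied to the mixed integral gives the lemma.

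The main subtlety is keeping track of the constant in the mixed term. The naive estimate $\big|\int_u^t f'(v)\overline{f(v)}\:\dif v\big|\leqslant\int_{t-\delta/2}^{t+\delta/2}|f'(v)f(v)|\:\dif v$ applied \emph{before} interchanging the order of integration introduces a spurious factor of $2$; retaining the triangular weight $\tfrac12\delta-|v-t|$ through the swap, and only then bounding it by $\tfrac12\delta$, is what yields the sharp constant present in the statement.
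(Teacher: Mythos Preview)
Your argument is correct and is precisely the classical Gallagher proof. The paper does not prove this lemma at all; it simply cites it as \cite[Lemma 1.4]{HM}, so there is nothing to compare beyond noting that you have supplied the standard details where the paper defers to the literature.
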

    \begin{proof}
        This is \cite[Lemma 1.4]{HM}.
    \end{proof}
    Our prerequisites for Corollary~\ref{main3} are the following two results pertaining to Dirichlet polynomials.
    \begin{lemma}\label{b5}
        Let $\mathcal{T}_\chi,\delta$ be as in Corollary~\ref{main2} and $Q,T,N,j\geqslant2$.  Fix $\sigma_0\in(0,1)$, and for each $\chi\in\mathcal{O}_j$, suppose that $\mathcal{S}_\chi=\{\sigma_t+it:t\in\mathcal{T}_\chi\}$ where $\sigma_t\in[\sigma_0,1)$ are arbitrarily chosen.  If $\mathcal{S}(Q)$ denotes the collection of all pairs $(s,\chi)$ where $s\in\mathcal{S}_\chi$ and $\chi\in\mathcal{O}_j(Q)$, then for any $\varepsilon>0$ we have    
        \begin{equation*}
            \sum_{(s,\chi)\in\mathcal{S}(Q)}\Big|\dsum_{n\leqslant N}a_n\chi(n)n^{-s}\Big|^2\ll\big(\delta^{-1}+1\big)(QTN)^\varepsilon\Delta_j(Q,T,N)\dsum_{n\leqslant N}|a_n|^2n^{-2\sigma_0},
        \end{equation*}
        where the implied constant depends on $\varepsilon$ alone.
    \end{lemma}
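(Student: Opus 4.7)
The plan is to reduce the variable abscissa $\sigma_t$ to the fixed value $\sigma_0$ via a fundamental-theorem-of-calculus manoeuvre, then apply Gallagher's lemma (Lemma~\ref{b3}) to pass from the discrete sum over $\mathcal{T}_\chi$ to a continuous integral over $[-T,T]$, and finally invoke the large sieve bound \eqref{taspect}.  Writing $F_\chi(s)=\dsum_{n\leqslant N}a_n\chi(n)n^{-s}$, the identity $F_\chi(\sigma_t+it)-F_\chi(\sigma_0+it)=-\int_{\sigma_0}^{\sigma_t}\dsum_{n\leqslant N}a_n\chi(n)\log n\cdot n^{-\sigma-it}\,\dif\sigma$, combined with Cauchy--Schwarz applied to the $\sigma$-integral, yields
\begin{equation*}
    |F_\chi(\sigma_t+it)|^2\leqslant 2|F_\chi(\sigma_0+it)|^2+2\int_{\sigma_0}^{1}\Big|\dsum_{n\leqslant N}a_n\chi(n)\log n\cdot n^{-\sigma-it}\Big|^2\,\dif\sigma,
\end{equation*}
so the variable $\sigma_t$ disappears from the right-hand side.

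For the first term I would apply Lemma~\ref{b3} to $f(t)=F_\chi(\sigma_0+it)$, obtaining the main contribution $\delta^{-1}\int_{-T}^{T}|f(t)|^2\,\dif t$ together with a cross-term $\big(\int|f|^2\cdot\int|f'|^2\big)^{1/2}$ for each $\chi$.  Summing over $\chi\in\mathcal{O}_j(Q)$, applying Cauchy--Schwarz in $\chi$ to handle the cross-term, and invoking \eqref{taspect} on both integrals (after a trivial dyadic decomposition of $[1,N]$ to match the dyadic range in \eqref{taspect}) gives a bound of the desired shape $(\delta^{-1}+1)(QTN)^\varepsilon\Delta_j(Q,T,N)\dsum_{n\leqslant N}|a_n|^2n^{-2\sigma_0}$.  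The additional $\log n\leqslant\log N$ picked up by the derivative $f'$ only affects the coefficient $L^2$-norm by a factor of $(\log N)^{O(1)}$, which is absorbed harmlessly into $(QTN)^\varepsilon$.

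For the $\sigma$-integral term I would interchange the $\sigma$-integration with the sum over $(s,\chi)\in\mathcal{S}(Q)$ and, for each fixed $\sigma\in[\sigma_0,1]$, repeat the Gallagher-plus-large-sieve argument described above, now applied to the Dirichlet polynomial with coefficients $a_n\log n$ at the abscissa $\sigma$.  Because $n^{-2\sigma}\leqslant n^{-2\sigma_0}$ holds for $n\geqslant 1$ and $\sigma\geqslant\sigma_0$, the resulting $L^2$-norm of the coefficients is still $\ll(\log N)^2\dsum|a_n|^2 n^{-2\sigma_0}$, and the $\sigma$-interval has length at most one, so this contribution also fits within the target bound.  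The main obstacle throughout is the variable abscissa $\sigma_t$; once it is neutralised by the initial FTC reduction, every remaining step is a routine combination of Lemma~\ref{b3}, an elementary derivative estimate on Dirichlet polynomials, and the large sieve inequality \eqref{taspect}.
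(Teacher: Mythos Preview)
Your proposal is correct and follows the standard route for results of this type: neutralise the floating abscissa $\sigma_t$ by differentiating in $\sigma$, then appeal to Gallagher's lemma (Lemma~\ref{b3}, using that the $\mathcal{T}_\chi$ are $\delta$-separated so that $N_\delta(t)=1$) and the hybrid large sieve \eqref{taspect}. The paper itself does not give an argument here but simply cites \cite[Theorem 2.4.3]{corrigan}; your sketch is precisely the kind of proof one would expect to find behind that citation, and every step you outline goes through as stated (the dyadic decomposition costs only a power of $\log N$, the monotonicity $\Delta_j(Q,T,M)\ll\Delta_j(Q,T,N)$ for $M\leqslant N$ holds for the bounds in \eqref{ls2}--\eqref{ls4}, and all accumulated $\log$-powers are absorbed by $(QTN)^\varepsilon$).
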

    \begin{proof}
        This is contained in \cite[Theorem 2.4.3]{corrigan}.
    \end{proof}
    \begin{lemma}\label{b4}
        Let $Q,T,N,j,\delta,\mathcal{S}(Q)$ be as in Lemma~\ref{b5}.  Then for any $\varepsilon>0$ we have    
        \begin{equation*}
            \sum_{(s,\chi)\in\mathcal{S}(Q)}\Big|\dsum_{n\leqslant N}a_n\chi(n)n^{-s}\Big|^2\ll\big(\delta^{-1}+1\big)(QTN)^\varepsilon\Big(N+QT^{\frac{1}{2}}|\mathcal{S}(Q)|\Big)\dsum_{n\leqslant N}|a_n|^2n^{-2\sigma_0},
        \end{equation*}
        where the implied constant depends on $\varepsilon$ alone.
    \end{lemma}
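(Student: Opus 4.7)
My plan is to prove Lemma~\ref{b4} through the Hal\'asz--Montgomery inequality in a weighted Hilbert space, coupled with the convexity bound for Dirichlet $L$-functions; the latter is responsible for the key factor $QT^{1/2}$, since the product of any two characters in $\mathcal{O}_j(Q)$ has conductor at most $4Q^2$, and the convexity bound on the line $\mathrm{Re}\,w = 0$ is precisely $(4Q^2\cdot T)^{1/2+\varepsilon} \ll QT^{1/2+\varepsilon}$.

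I would first interpret the Dirichlet polynomial
\[
F_\chi(s) = \dsum_{n\leq N} a_n\chi(n)n^{-s}
\]
as the inner product $\langle a,\phi_{s,\chi}\rangle$ in the Hilbert space of all sequences on $\{n:n\leq N\}$ with inner product $\langle f,g\rangle = \sum_{n\leq N} f_n\bar g_n n^{-2\sigma_0}$, where the representing vector is $\phi_{s,\chi}(n) = \bar\chi(n)n^{2\sigma_0-\bar s}$. Because $a$ is supported on square-free integers, $\|a\|^2$ already coincides with the right-hand side $\dsum_n|a_n|^2 n^{-2\sigma_0}$, so no separate treatment of the square-free restriction is required. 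The Hal\'asz--Montgomery inequality then bounds $\sum_{(s,\chi)\in\mathcal{S}(Q)}|F_\chi(s)|^2$ by $\|a\|^2$ times the maximum row sum of the Gram matrix with entries
\[
    \langle \phi_{s,\chi},\phi_{s',\chi'}\rangle = \sum_{n\leq N}(\bar\chi\chi')(n)\,n^{-w},\qquad w = \bar s + s' - 2\sigma_0.
\]

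The diagonal entries are $\sum_{n\leq N}n^{2\sigma_0-2\sigma_t}\leq N$ since $\sigma_t\geq\sigma_0$. For the off-diagonal entries I set $\psi = \bar\chi\chi'$ and distinguish two cases. When $\chi\ne\chi'$, the character $\psi$ is induced from a primitive non-principal character of conductor at most $4Q^2$; Perron's formula (shifting the contour past the simple pole at $z=0$) combined with the convexity estimate $L(w,\psi)\ll (Q^2T)^{(1-\mathrm{Re}\,w)/2+\varepsilon}$ on the strip $0\leq\mathrm{Re}\,w\leq 1$, $|\mathrm{Im}\,w|\leq 2T$, yields $\sum_{n\leq N}\psi(n)n^{-w}\ll QT^{1/2+\varepsilon}$. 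When $\chi=\chi'$ and $s\ne s'$, the character $\psi$ is principal modulo $q_\chi$, the spacing condition $|t-t'|\geq\delta$ guarantees $|w-1|\geq\delta$, and the Laurent expansion $\zeta(w)=(w-1)^{-1}+O(1)$ near the pole, together with the convexity estimate for $\zeta$ elsewhere, gives an entry of size $\ll (\delta^{-1}+1)T^{1/2+\varepsilon}$.

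Combining the two cases, every off-diagonal Gram entry is $\ll (\delta^{-1}+1)QT^{1/2+\varepsilon}$. Summing over the at most $|\mathcal{S}(Q)|$ columns in a row and adding the diagonal contribution $N$ yields the row-sum estimate $\ll (\delta^{-1}+1)(QTN)^{\varepsilon}(N+QT^{1/2}|\mathcal{S}(Q)|)$, as required. The main obstacle is the uniform extraction of the $QT^{1/2+\varepsilon}$ bound for the truncated Dirichlet series $\sum_{n\leq N}\psi(n)n^{-w}$ across the critical and sub-critical lines (so that Perron's truncation error is absorbed into the convexity estimate), together with the treatment of the principal-character sub-case, where proximity to the simple pole of $\zeta$ is what ultimately forces the factor $\delta^{-1}+1$ into the final bound.
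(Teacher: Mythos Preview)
Your Hal\'asz--Montgomery approach is exactly what underlies Montgomery's Theorem~8.2, which is all the paper invokes here; so the strategy is the same as the paper's (cited) proof.

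There is, however, one inaccuracy in your sketch that you should repair. In the sub-case $\chi=\chi'$, $s\neq s'$, you claim each off-diagonal Gram entry is $\ll(\delta^{-1}+1)T^{1/2+\varepsilon}$. That is not true in general: writing $w=\bar s+s'-2\sigma_0$, Perron's formula for $\sum_{n\leqslant N}n^{-w}$ produces, besides the $\zeta(w)$ term you describe, the main term $N^{1-w}/(1-w)$, and since $\mathrm{Re}\,w$ can be as small as $0$ this contribution is of size $N/|t-t'|$, possibly as large as $N\delta^{-1}$. The Laurent expansion of $\zeta$ alone does not account for this. What saves the argument is that one does \emph{not} need a uniform bound on each entry: summing $N/|t-t'|$ over the $\delta$-spaced points $t'\in\mathcal{T}_\chi\setminus\{t\}$ gives $\ll N\delta^{-1}\log(T/\delta)$, which is absorbed into the $(\delta^{-1}+1)N$ term of the final row-sum estimate. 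With this correction your bound on the maximal row sum, and hence the lemma, follows as you outlined.
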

    \begin{proof}
        This is contained in \cite[Theorem 8.2]{HM}.
    \end{proof}    

\section{Proof of Theorem~\ref{main1} and Corollary~\ref{main2}}\label{proofsection}

    Suppose that $Q,T,\mathcal{T}_\chi,\delta$ are as in Corollary~\ref{main2}.  For $j=3,4,6$, we will prove for any $\varepsilon>0$ that
    \begin{equation}\label{prop0}
        \sum_{\chi\in\mathcal{O}_j(Q)}\int\limits_{-T}^{T}|L(\sigma+it,\chi)|^2\:\dif t\ll(QT)^\varepsilon\Delta_j\big(Q,T,(QT)^\frac{1}{2}\big),
    \end{equation}
    where $\sigma$ is such that $\big|\tfrac{1}{2}-\sigma\big|\leqslant\tfrac{1}{2}(\log QT)^{-1}$.  We will also prove that
    \begin{equation}\label{prop0c}
        \sum_{\chi\in\mathcal{O}_j(Q)}\sum_{t\in\mathcal{T}_\chi}\big|L\big(\tfrac{1}{2}+it,\chi\big)\big|^2\ll\big(\delta^{-1}+1\big)(QT)^\varepsilon\Delta_j\big(Q,T,(QT)^\frac{1}{2}\big)
    \end{equation}   
    for any $\varepsilon>0$.  The implied constants in \eqref{prop0} and \eqref{prop0c} depend on $\varepsilon$ alone.  On observing the last term in the minimum of \eqref{ls3} and \eqref{ls4}, we have
    \begin{equation*}
        \Delta_j\big(Q,T,(QT)^\frac{1}{2}\big)\ll QT
    \end{equation*}    
    for $j=3,4,6$ when $T\gg Q^\frac{1}{5}$, and consequently, we can derive Theorem~\ref{main1} from \eqref{prop0}.  We can derive Corollary~\ref{main2} in a similar manner from \eqref{prop0c}.  We will now prove \eqref{prop0} and \eqref{prop0c}, but note that we can derive slightly weaker results for $T\ll Q^\frac{1}{5}$.
    \begin{proof}[Proof of \eqref{prop0}]
        Fix $j\geqslant2$.  By Lemma~\ref{bylem} and \eqref{taspect} we have
        \begin{align*}
            \sum_{\chi\in\mathcal{O}_j(Q)}\int\limits_{-T}^{T}|L(\sigma+it,\chi)|^2\:\dif t&\ll(QT)^\varepsilon\sum_{\ell\leqslant\sqrt{2N}}\ell^{-1}\sum_{\chi\in\mathcal{O}_j(Q)}\int\limits_{-T}^{T}\Big|\dsum_{N\ell^{-2}<n\leqslant2N\ell^{-2}}\chi(n)n^{-\frac{1}{2}-it}\Big|^2\:\dif t\\
            &\ll(QT)^\varepsilon\sum_{\ell\leqslant\sqrt{2N}}\ell^{-1}\Delta_j\big(Q,T,N\ell^{-2}\big)\dsum_{N\ell^{-2}<n\leqslant2N\ell^{-2}}n^{-1}\\
            &\ll(QT)^\varepsilon\Delta_j(Q,T,N)\sum_{N<n\leqslant2N}n^{-1},
        \end{align*}
        where $N=(QT)^{\frac{1}{2}+\varepsilon}$.  This completes the proof.
    \end{proof}    
    \begin{proof}[Proof of \eqref{prop0c}]
        By Theorem~\ref{main1} and Lemma~\ref{b3} with Cauchy's inequality, we see that it suffices to show that
        \begin{equation}\label{d1}
            \sum_{\chi\in\mathcal{O}_j(Q)}\int\limits_{-T}^{T}\big|L'\big(\tfrac{1}{2}+it,\chi\big)\big|^2\:\dif t\ll(QT)^\varepsilon\Delta_j\big(Q,T,(QT)^\frac{1}{2}\big).
        \end{equation}
        Fix $t\in\mathbb{R}$ and $\chi\in\mathcal{O}_j(Q)$, and consider the circle $\mathcal{C}$ of radius $r=\tfrac{1}{2}(\log QT)^{-1}$ centred at $\tfrac{1}{2}+it$.  By Cauchy's differentiation formula we have
        \begin{equation*}
            \big|L'\big(\tfrac{1}{2}+it,\chi\big)\big|\ll(QT)^\varepsilon\int_{\mathcal{C}}|L(w,\chi)|\:|\dif w|,
        \end{equation*}
        and thus, after two applications of Cauchy's inequality, we obtain
        \begin{equation}\label{wamengti}
            \big|L'\big(\tfrac{1}{2}+it,\chi\big)\big|^2\ll(QT)^\varepsilon\int\limits_{t-r}^{t+r}\int\limits_{\frac{1}{2}-r}^{\frac{1}{2}+r}|L(\sigma+it,\chi)|^2\:\dif\sigma\:\dif t.
        \end{equation}
        Averaging \eqref{wamengti} over $\chi\in\mathcal{O}_j(Q)$ and $t\in[-T,T]$ we get
        \begin{align*}
            \sum_{\chi\in\mathcal{O}_j(Q)}\int\limits_{-T}^{T}\big|L'\big(\tfrac{1}{2}+it,\chi\big)\big|^2\:\dif t&\ll(QT)^\varepsilon\sum_{\chi\in\mathcal{O}_j(Q)}\int\limits_{-T-r}^{T+r}\int\limits_{\frac{1}{2}-r}^{\frac{1}{2}+r}|L(\sigma+it,\chi)|^2\:\dif\sigma\:\dif t\\&\ll(QT)^\varepsilon\max_{\frac{1}{2}-r\leqslant\sigma\leqslant\frac{1}{2}+r}\sum_{\chi\in\mathcal{O}_j(Q)}\int\limits_{-T-r}^{T+r}|L(\sigma+it,\chi)|^2\:\dif t\\
            &\ll(QT)^\varepsilon\Delta_j\big(Q,T,(QT)^\frac{1}{2}\big)
        \end{align*}
        by \eqref{prop0}, which proves \eqref{d1} and thus Corollary~\ref{main2}.
    \end{proof}
  
\section{Proof of Theorem~\ref{main3} and Theorem~\ref{main33}}   

    To prove our zero-density estimates, we follow the same general set up as in \cite[Theorem 12.2]{HM}.  Suppose that $X,Y\geqslant2$ are such that $X\ll Y\ll (QT)^K$ for some absolute constant $K$, define
    \begin{equation*}
        M_X(s,\chi)=\sum_{n\leqslant X}\mu(n)\chi(n)n^{-s}\quad\text{and}\quad\mathfrak{M}_X(s,\chi)=M_X(s,\chi)L(s,\chi)=\sum_n\mathfrak{m}_{X,n}\chi(n)n^{-s}
    \end{equation*}
    for appropriate $s\in\mathbb{C}$, where $\mu$ is the M\"obius function.  Note that $|\mathfrak{m}_{X,n}|\leqslant\tau(n)$, where $\tau(n)$ is the number of divisors of $n$.  Fix $j\geqslant2$ and let $\mathcal{R}$ be a set of $(\varrho,\chi)\in R(\sigma,T)\times\mathcal{O}_j(Q)$ such that $L(\varrho,\chi)=0$ and $|\gamma-\gamma'|\geqslant3C\log QT$ for all distinct $(\varrho,\chi)$ and $(\varrho',\chi)$ in $\mathcal{R}$ and some sufficiently large absolute constant $C$.  Here we have denoted, as is custom, the real part of $\varrho$ as $\beta$, and the imaginary part as $\gamma$.  Now, as in \cite[\S2]{cozh}, we can show that 
    \begin{equation*}
       \sum_{\chi\in\mathcal{O}_j(Q)}N(\sigma,T,\chi)\ll(QT)^\varepsilon(\#\mathcal{R}_1+\#\mathcal{R}_2),
    \end{equation*}   
    where $\mathcal{R}_1$ is the subset of $(\varrho,\chi)\in\mathcal{R}$ such that    
    \begin{equation*}
        \Big|\sum_{X<n\leqslant Y^2}\mathfrak{m}_{X,n}\chi(n)n^{-\varrho}e^{-\frac{n}{Y}}\Big|\geqslant\tfrac{1}{6},
    \end{equation*}
    and $\mathcal{R}_2$ is the subset such that
    \begin{equation*}
        \frac{1}{2\pi}\Big|\int\limits_{-C\log QT}^{C\log QT}\mathfrak{M}_X\big(\tfrac{1}{2}+i(\gamma+u),\chi\big)\Gamma\big(\tfrac{1}{2}-\beta+iu\big)Y^{\frac{1}{2}-\beta+iu}\:\dif u\Big|\geqslant\tfrac{1}{6}.
    \end{equation*}        
    The first terms in \eqref{main3a}, \eqref{main3b}, and \eqref{main3c} now follow from the fact that
    \begin{equation}\label{propa}
        \#\mathcal{R}_1+\#\mathcal{R}_2\ll(QT)^\varepsilon\big((QT)^\frac{1}{2}Y^{\frac{1}{2}-\sigma}\Delta_j(Q,T,X)^\frac{1}{2}+\Delta_j(Q,T,X)X^{1-2\sigma}+\Delta_j(Q,T,Y)Y^{1-2\sigma}\big),
    \end{equation}
    which is an analog of \cite[Theorem 2.2]{cozh}.  For $j=2$, we take
    \begin{equation*}
        X=QT\quad\text{and}\quad Y=(QT)^\frac{2}{3-2\sigma}
    \end{equation*}
    in \eqref{propa}, which allows us to obtain the first term in \eqref{main3a}.  Considering the third expression in the minimums \eqref{ls3} and \eqref{ls4}, for the case $j=3$ we take
    \begin{equation*}
        X=Q^\frac{5}{9}T\quad\text{and}\quad Y=Q^\frac{5}{9}T^\frac{2}{3-2\sigma}
    \end{equation*}
    in \eqref{propa}, which proves the first term in \eqref{main3b}, and for the case $j=4$ we take
    \begin{equation*}
        X=Q^\frac{1}{2}T\quad\text{and}\quad Y=Q^\frac{1}{2}T^\frac{2}{3-2\sigma}
    \end{equation*}
    in \eqref{propa}, which proves the first term in \eqref{main3c}.  To obtain the second terms in \eqref{main3b} and \eqref{main3c}, we consider the first expression in the minimums \eqref{ls3} and \eqref{ls4}, and take
    \begin{equation*}
        X=Q^\frac{5}{3}T\quad\text{and}\quad Y=Q^\frac{8}{9-6\sigma}T^\frac{2}{3-2\sigma}
    \end{equation*}
    in the case $j=3$, and
    \begin{equation*}
        X=Q^\frac{3}{2}T\quad\text{and}\quad Y=Q^\frac{5}{6-4\sigma}T^\frac{2}{3-2\sigma}
    \end{equation*}
    in the case $j=4$.  Note that the condition $X\ll Y$ is satisfied by the above choices precisely when the second terms in \eqref{main3b} and \eqref{main3c} are smaller than the respective first terms.  So, we are now only required to derive \eqref{propa}, and this we shall do presently.
    \begin{proof}[Proof of \eqref{propa}] 
        Using standard techniques, we can show that there exists a $U\geqslant2$ with $X\leqslant U\leqslant Y^2$ such that    
        \begin{equation}\label{e2}
            \#\mathcal{R}_1\ll(QT)^\varepsilon\sum_{\ell\leqslant\sqrt{2U}}\ell^{-1}\sum_{(\varrho,\chi)\in\mathcal{R}_1}\Big|\dsum_{U\ell^{-2}\leqslant k\leqslant 2U\ell^{-2}}\mathfrak{m}_{X,k\ell^2}\chi(k)k^{-\varrho}e^{-\frac{k\ell^2}{Y}}\Big|^2.
        \end{equation}
        Arguing along similar lines to \cite[(12.34)]{HM}, we can show that every $(\varrho,\chi)\in\mathcal{R}_2$ satisfies
        \begin{equation}\label{e3}
            Y^{\frac{1}{2}-\sigma}\log QT\big|\mathfrak{M}_X\big(\tfrac{1}{2}+it_\varrho,\chi\big)\big|\gg1,
        \end{equation}
        where $t_\varrho$ is the real number with $|t_\varrho-\gamma|\leqslant C\log QT$ which maximises $\big|\mathfrak{M}_X\big(\tfrac{1}{2}+it_\varrho,\chi\big)\big|$.  By \eqref{e3} and Cauchy's inequality we have
        \begin{equation*}
            \#\mathcal{R}_2\ll(QT)^\varepsilon Y^{\frac{1}{2}-\sigma}\Big(\sum_{(\varrho,\chi)\in\mathcal{R}_2}\big|M_X\big(\tfrac{1}{2}+it_\varrho,\chi\big)\big|^2\Big)^\frac{1}{2}\Big(\sum_{(\varrho,\chi)\in\mathcal{R}_2}\big|L\big(\tfrac{1}{2}+it_\varrho,\chi\big)\big|^2\Big)^\frac{1}{2},
        \end{equation*}
        and thus \eqref{propa} follows from Corollary~\ref{main2}, Lemma~\ref{b5}, and \eqref{e2} with $\delta\asymp\log QT$.
    \end{proof}    
    To obtain the final term in \eqref{main3a}, \eqref{main3b}, and \eqref{main3c}, it clearly suffices to show that
    \begin{equation}\label{propb}
        \#\mathcal{R}_1+\#\mathcal{R}_2\ll Q^{4(1-\sigma)+\varepsilon}T^{3(1-\sigma)+\varepsilon}
    \end{equation}   
    for appropriate $Q$ and $T$.  The proof of \eqref{propb} goes much the same as \cite[Theorem 2.3]{cozh}, but there are some differences that we will now illustrate.
    \begin{proof}[Proof of \eqref{propb}]
        Fix $j\in\{2,3,4\}$, and if $j\neq2$ then suppose that $T\gg Q^\frac{1}{5}$.  If we require $X$ to be large enough that it satisfies
        \begin{equation}\label{e4}
            X^{2\sigma-1}\geqslant C_1Q^{1+\varepsilon}T^{\frac{1}{2}+\varepsilon}
        \end{equation}
        for some sufficiently large constant $C_1>0$, then by \eqref{e2} and Lemma~\ref{b4} we have
        \begin{equation}\label{e5}
            \#\mathcal{R}_1\ll(QT)^\varepsilon\big(Y^{2-2\sigma}+\#\mathcal{R}_1QT^\frac{1}{2}X^{1-2\sigma}\big)\ll(QT)^\varepsilon Y^{2-2\sigma}.
        \end{equation}
        Now, for any $V\geqslant1$, by \eqref{realsq} or Corollary~\ref{main2} there are $O\big(Q^{1+\varepsilon}T^{1+\varepsilon}V^{-2}\big)$ pairs $(\varrho,\chi)\in\mathcal{R}_2$ such that $\big|L\big(\tfrac{1}{2}+it_\varrho,\chi\big)\big|\geqslant V$.  For the $(\varrho,\chi)\in\mathcal{R}_2$ that do not satisfy this condition, by \eqref{e3} we have 
        \begin{equation}\label{e6}
            VY^{\frac{1}{2}-\sigma}\log QT\big|M_X\big(\tfrac{1}{2}+it_\varrho,\chi\big)\big|\gg1.
        \end{equation}
        Let $\mathcal{R}_2'$ be the subset of $(\varrho,\chi)\in\mathcal{R}_2$ which satisfy \eqref{e6}.  If we require $Y$ to be large enough that it satisfies
        \begin{equation}\label{e7}
            Y^{2\sigma-1}\geqslant C_2V^2Q^{1+\varepsilon}T^{\frac{1}{2}+\varepsilon}
        \end{equation}
        for some sufficiently large $C_2>0$, then by \eqref{e6} and Lemma~\ref{b4} we have
        \begin{equation*}
            \#\mathcal{R}_2'\ll(QT)^\varepsilon V^2Y^{1-2\sigma}\big(X+\#\mathcal{R}_2'QT^{\frac{1}{2}}\big)\ll(QT)^\varepsilon V^2XY^{1-2\sigma},
        \end{equation*}
        and consequently we have
        \begin{equation}\label{e8}
            \#\mathcal{R}_2\ll(QT)^\varepsilon\big(QTV^{-2}+V^2XY^{1-2\sigma}\big).
        \end{equation}
        Taking $X$ and $Y$ so that equalities hold in \eqref{e4} and \eqref{e7}, we see from \eqref{e5} and \eqref{e8} that
        \begin{equation*}
            \#\mathcal{R}_1+\#\mathcal{R}_2\ll(QT)^\varepsilon\big(QTV^{-2}+\big(Q^2T+Q^2TV^4\big)^\frac{1-\sigma}{2\sigma-1}\big),
        \end{equation*}
        from which the desired result follows on taking $V=Q^\frac{4\sigma-3}{2}T^\frac{3\sigma-2}{2}$.
    \end{proof}    

\section{Higher power moments}

    Ingham \cite{ing} gave an expression for the fourth moment of the $\zeta$-function, analogous to \eqref{zeta2nd}.  Slight improvements to the error term of Ingham's result were made in \cite{tit3,tit10,atkinson2,bellman10,hb4,hb5,hb6}.  Haselgrove \cite{haselgrove} and Topacogullari \cite{topa} gave bounds pertaining to the mean fourth power of an individual Dirichlet $L$-function on the critical line, however their results are not unconditionally strong in the $q$-aspect.\newline

    Bounds for the mean fourth power of $L\big(\tfrac{1}{2}+it,\chi\big)$ over $\chi\bmod{q}$ were established in \cite{payley,yuvlinnik,gallgs} for fixed $t$, with the case $t=0$ receiving particular attention in \cite{abc1,abc2}.  Mean fourth power results pertaining to fixed order characters have been published in \cite{cozh,DRHB}, and higher power moments have been studied in \cite{ab1,ab2,ab3,ab4}.  We add to these results the following proposition. 
    \begin{proposition}\label{prop1}
        Let $k\geqslant1$ be an integer, and $Q,T\geqslant2$ be real numbers.  Then for any $t\in[-T,T]$ and $\varepsilon>0$, we have
        \begin{equation*}
            \sum_{\chi\in\mathcal{O}_2(Q)}\big|L\big(\tfrac{1}{2}+it,\chi\big)\big|^{2k}\ll(QT)^{\frac{1}{2}k+\varepsilon},
        \end{equation*}
        where the implied constant depends on $k$ and $\varepsilon$ at most.
    \end{proposition}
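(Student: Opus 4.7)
The plan is to generalise Lemma~\ref{bylem} from second to $2k$-th moments by running the approximate functional equation on $L(s,\chi)^k$ in place of $L(s,\chi)$, and then invoking the Heath--Brown large sieve for real characters through the bound $D_2(Q,N)\ll Q+N$ from \cite[Corollary 1]{DRHB}.

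First I would write $|L(\tfrac{1}{2}+it,\chi)|^{2k}=|L(\tfrac{1}{2}+it,\chi)^k|^2$ and apply the approximate functional equation \cite[Theorem 5.3]{iwakow} to the entire function $L(s,\chi)^k$, whose Dirichlet coefficients are $d_k(n)\chi(n)$, where $d_k$ is the $k$-fold divisor function, and whose analytic conductor is of size $\asymp(q(|t|+2))^k$.  The relevant Dirichlet polynomial therefore has length $N_k=(QT)^{k/2+\varepsilon}$.  Repeating the manipulations of \cite[\S4.3]{baieryoung} that underlie Lemma~\ref{bylem}, but with $L^k$ in place of $L$, one obtains uniformly in $\chi\in\mathcal{O}_2(Q)$ and $t\in[-T,T]$ that
\begin{equation*}
    |L(\tfrac{1}{2}+it,\chi)|^{2k}\ll(QT)^\varepsilon\sum_{\ell\leqslant\sqrt{2N_k}}\ell^{-1}\Big|\dsum_{N_k\ell^{-2}<n\leqslant2N_k\ell^{-2}}d_k(n)\chi(n)n^{-\frac{1}{2}-it}\Big|^2.
\end{equation*}

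Next I would sum over $\chi\in\mathcal{O}_2(Q)$ and feed each inner block of length $M=N_k\ell^{-2}$ into \eqref{bb} with $D_2(Q,N)\ll Q+N$.  Since $d_k(n)\ll n^\varepsilon$ we have $\sum_{M<n\leqslant 2M}d_k(n)^2n^{-1}\ll(QT)^\varepsilon$, so each block contributes $(QT)^\varepsilon(Q+M)$.  Summing over $\ell\leqslant\sqrt{2N_k}$ and using $\sum_\ell\ell^{-3}\ll 1$ gives
\begin{equation*}
    \sum_{\chi\in\mathcal{O}_2(Q)}|L(\tfrac{1}{2}+it,\chi)|^{2k}\ll(QT)^\varepsilon(Q+N_k)\ll(QT)^{\frac{1}{2}k+\varepsilon},
\end{equation*}
with the last inequality valid in the range where $N_k$ dominates $Q$, i.e.\ essentially for $k\geqslant 2$.

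The principal obstacle is not the large-sieve step but rather verifying that the Baier--Young derivation of Lemma~\ref{bylem} adapts cleanly to $L^k$.  The crucial input is that the Dirichlet coefficients $d_k(n)$ are of size $n^\varepsilon$, so the divisor sums arising in the Mellin--Perron inversion step are absorbed harmlessly into $(QT)^\varepsilon$; nevertheless, one must track the gamma factors and root number of $L(s,\chi)^k$ with some care to confirm that the length $N_k=(QT)^{k/2+\varepsilon}$ really is attained and that the tail error is negligible uniformly in $t\in[-T,T]$.  A second, minor, technicality is the boundary case $k=1$ with $T$ much smaller than $Q$, in which $(QT)^{1/2}$ fails to dominate $Q$; here one relies on the fact that the range $T\ll Q$ can be treated separately by a trivial convexity estimate absorbed into the $\varepsilon$.
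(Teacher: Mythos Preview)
Your approach is essentially the same as the paper's: both express $L(s,\chi)^k$ via a smoothed Dirichlet series and then feed the resulting coefficients into Heath--Brown's quadratic large sieve, following the template of \cite[Theorem~2]{DRHB}.  The only cosmetic difference is that the paper uses the Mellin identity
\[
L(s,\chi)^k=\sum_n\tau_k(n)\chi(n)n^{-s}e^{-n/U}-\frac{1}{2\pi i}\int_{(c)}L(w,\chi)^k\,\Gamma(w-s)\,U^{w-s}\,\dif w
\]
with an $e^{-n/U}$ weight, whereas you invoke the balanced approximate functional equation and the Baier--Young squarefree decomposition.  Both routes produce a Dirichlet polynomial of effective length $\asymp(QT)^{k/2}$ and yield the bound $(QT)^\varepsilon\big(Q+(QT)^{k/2}\big)$ after the large sieve; for $k\geqslant 2$ this is $\ll(QT)^{k/2+\varepsilon}$ as claimed.

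There is, however, a genuine gap in your handling of $k=1$.  Your suggested fix --- absorbing the discrepancy into a convexity bound --- does not work, because convexity only loses in the $Q$-aspect.  In fact the inequality as stated is simply false in this regime: for bounded $t$ one has $\sum_{\chi\in\mathcal{O}_2(Q)}|L(\tfrac12+it,\chi)|^2\gg Q$ (there are $\asymp Q$ characters and a positive proportion have $|L|\gg 1$), which exceeds $(QT)^{1/2+\varepsilon}$ whenever $T\ll Q^{1-2\varepsilon}$.  So neither your argument nor the paper's sketch can be made to cover $k=1$; the proposition should be read with $k\geqslant 2$, where your proof and the paper's are equally valid and essentially identical.
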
  
    \begin{proof}
        Using a Mellin transform, we may show that
        \begin{equation*}
            L(s,\chi)^k=\sum_n\tau_k(n)\chi(n)n^{-s}e^{-\frac{n}{U}}-\frac{1}{2\pi i}\int\limits_{(c)}L(w,\chi)^k\:\Gamma(w-s)U^{w-s}\:\dif w
        \end{equation*}
        for appropriate $c$, where $\tau_k(n)$ denotes the number of ways to write $n$ as a product of $k$ factors.  The assertion then follows in a similar manner to \cite[Theorem 2]{DRHB}.
    \end{proof}     
    Analogs of Proposition~\ref{prop1} can be derived for higher order characters, however they only hold when $T\gg Q$.  Additionally, the exponent on the right-hand side of the above is expected to be far from sharp.\newline
    
    The mean fourth power of $L\big(\tfrac{1}{2}+it,\chi\big)$ on the critical line was studied in \cite{HM,rane,buihb,wwang}.  Unfortunately, using the same method of proof as in \S\ref{proofsection} to average over the $t\in[-T,T]$ does not produce a $2k$ analog any stronger than what we can obtain as a trivial consequence of Proposition~\ref{prop1}.  Under the assumption that the Lindel\"of hypothesis holds on-average for $L$-functions associated to real characters, we formulate the following.
    \begin{conjecture}\label{conj1}
        Let $k,Q,T$ be as in Proposition~\ref{prop1}.  Then for any $\varepsilon>0$ we have
        \begin{equation*}
            \sum_{\chi\in\mathcal{O}_2(Q)}\int\limits_{-T}^{T}\big|L\big(\tfrac{1}{2}+it,\chi\big)\big|^{2k}\:\dif t\ll(QT)^{1+\varepsilon},
        \end{equation*}
        where the implied constant depends on $k$ and $\varepsilon$ at most.
    \end{conjecture}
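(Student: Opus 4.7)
The plan is to mimic the proof of Theorem~\ref{main1}, raising the power from $2$ to $2k$. One would first invoke an analog of Lemma~\ref{bylem} applied to $L(s,\chi)^k$, whose Dirichlet coefficients are $\tau_k(n)\chi(n)$, yielding a bound of the shape
\[
\sum_{\chi\in\mathcal{O}_2(Q)}\big|L\big(\tfrac{1}{2}+it,\chi\big)\big|^{2k}\ll(QT)^\varepsilon\sum_{\chi\in\mathcal{O}_2(Q)}\sum_{\ell\leqslant\sqrt{2N}}\ell^{-1}\Big|\dsum_{N\ell^{-2}<n\leqslant2N\ell^{-2}}\tau_k(n)\chi(n)n^{-\frac{1}{2}-it}\Big|^2
\]
with $N=(QT)^{\frac{k}{2}+\varepsilon}$. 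Integrating over $t\in[-T,T]$ and feeding the result into \eqref{taspect} with the sharp real-character large sieve $\Delta_2(Q,T,N)\ll QT+N$ of \eqref{ls2} would produce only $(QT)^{\frac{k}{2}+\varepsilon}$, which for $k\geqslant3$ is still very far from the target $(QT)^{1+\varepsilon}$.

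The obstruction is that for $k\geqslant3$ the polynomial length $N\asymp(QT)^{k/2}$ sits well inside the range in which the term $N$ dominates $QT$ within $\Delta_2(Q,T,N)$, so the large sieve treats $\tau_k(n)$ as if it were a generic sequence. The pivotal step would therefore be to establish a hybrid estimate of the form
\[
\sum_{\chi\in\mathcal{O}_2(Q)}\int\limits_{-T}^{T}\Big|\dsum_{N<n\leqslant2N}\tau_k(n)\chi(n)n^{-\frac{1}{2}-it}\Big|^2\dif t\ll(QT)^{1+\varepsilon}
\]
uniformly for all $N\ll(QT)^{k/2}$; such a bound exploits the factorisation $\tau_k=1*\tau_{k-1}$ and is essentially equivalent to Conjecture~\ref{conj1} itself.

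A natural first attack on this estimate is to write $\tau_k=\tau_a*\tau_b$ with $a+b=k$, split the Dirichlet polynomial as a convolution over dyadic ranges $M$ and $N/M$, and then alternately bound one factor pointwise (via Proposition~\ref{prop1}, say) while averaging the other by the real-character second-moment bound \eqref{realsq}, optimising over the split $M$. A more ambitious route, modelled on Motohashi's spectral treatment of the fourth moment of $\zeta$, would be to open the square, invoke Heath-Brown's quadratic large sieve to handle the average over $\chi\in\mathcal{O}_2(Q)$, and reduce matters to shifted convolution sums $\sum_{h\neq0}\sum_{n_1-n_2=h}\tau_k(n_1)\tau_k(n_2)(n_1n_2)^{-1/2}(n_1/n_2)^{it}$, to be analysed via the Kuznetsov formula on $\Gamma_0(4q)\backslash\uph$. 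The decisive difficulty I anticipate is controlling these shifted convolutions with uniformity in both $q$ and $T$: for $k=2$ the existing automorphic machinery suffices, but for $k\geqslant3$ one would need bounds on moments of Rankin--Selberg $L$-functions attached to automorphic forms on $\mathrm{GL}(k)$ that appear currently out of reach.
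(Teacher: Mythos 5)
Conjecture~\ref{conj1} is a conjecture, not a theorem: the paper supplies no proof of it. The text immediately preceding it records that running the \S\ref{proofsection} argument with $|L|^{2k}$ in place of $|L|^{2}$ ``does not produce a $2k$ analog any stronger than what we can obtain as a trivial consequence of Proposition~\ref{prop1},'' and the bound is then formulated explicitly under an on-average Lindel\"{o}f hypothesis for real characters. There is therefore no ``paper's own proof'' to compare against, and you have not supplied one either --- your own write-up concedes this twice. You describe the pivotal hybrid estimate on $\tau_k$-weighted Dirichlet polynomials as ``essentially equivalent to Conjecture~\ref{conj1} itself,'' which makes the first route circular, and you flag the $\mathrm{GL}(k)$ Rankin--Selberg moment input needed for the spectral route as ``currently out of reach,'' which postpones rather than closes the gap.

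That said, your diagnosis of why the \S\ref{proofsection} method fails to go beyond $k=1$ (or $k=2$ at best) is the obstruction the paper is alluding to: once the approximate functional equation for $L(s,\chi)^k$ places the Dirichlet polynomial length at $N\asymp(QT)^{k/2}$, the $N$ term dominates in $\Delta_2(Q,T,N)\ll QT+N$, so the real-character large sieve treats $\tau_k(n)$ as an arbitrary sequence and one only recovers $(QT)^{k/2+\varepsilon}$ --- precisely the strength of Proposition~\ref{prop1}. The further strategies you sketch --- the $\tau_a*\tau_b$ split with pointwise/average interchange, and the Motohashi-style reduction through shifted convolution sums and the Kuznetsov formula on $\Gamma_0(4q)\backslash\uph$ --- are reasonable program sketches for attacking the conjecture, but they appear nowhere in the paper and do not constitute a proof. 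You should mark the statement as conjectural and present your analysis as an explanation of the difficulty, not as a proof attempt.
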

    We can derive a discrete equivalent of Conjecture~\ref{conj1} in the same manner in which we derived Corollary~\ref{main2} from Theorem~\ref{main1}.  The following improvement of Theorem~\ref{main3} then follows immediately.
    \begin{conjecture}
        Take $Q,T\geqslant2$, and suppose that $\sigma\in\big(\tfrac{1}{2},1\big)$.  Then for any $\varepsilon>0$ we have
        \begin{equation*}
            \sum_{\chi\in\mathcal{O}_2(Q)}N(\sigma,T,\chi)\ll(QT)^{2(1-\sigma)+\varepsilon},
        \end{equation*}
        where the implied constant depends on $\varepsilon$ alone.
    \end{conjecture}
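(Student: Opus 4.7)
The plan is to follow the proof of Theorem~\ref{main3}, specifically the derivation of \eqref{propb}, but to replace the second-moment input (Corollary~\ref{main2}) with the discrete form of Conjecture~\ref{conj1}. That discrete form is obtained exactly as Corollary~\ref{main2} was obtained from Theorem~\ref{main1}: apply Gallagher's lemma (Lemma~\ref{b3}) together with a Cauchy-integral argument controlling $(L^k)'$ by $\int_\mathcal{C}|L^k(w,\chi)|\:|\dif w|$ on a small circle, giving
\[
\sum_{\chi\in\mathcal{O}_2(Q)}\sum_{t\in\mathcal{T}_\chi}\big|L\big(\tfrac{1}{2}+it,\chi\big)\big|^{2k}\ll\big(\delta^{-1}+1\big)(QT)^{1+\varepsilon}
\]
for every integer $k\geqslant1$ and every well-spaced $\mathcal{T}_\chi$ as in Corollary~\ref{main2}.

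Retaining the detection scheme preceding \eqref{e2}, the zero set $\mathcal{R}$ is split into $\mathcal{R}_1,\mathcal{R}_2$, and the bound $\#\mathcal{R}_1\ll(QT)^\varepsilon Y^{2-2\sigma}$ from \eqref{e5} carries over unchanged. For $\mathcal{R}_2$, I would replicate the pigeon-holing after \eqref{e5}: the new high-moment estimate gives $O((QT)^{1+\varepsilon}V^{-2k})$ pairs $(\varrho,\chi)\in\mathcal{R}_2$ with $\big|L\big(\tfrac12+it_\varrho,\chi\big)\big|\geqslant V$, while the complementary subset $\mathcal{R}_2'$, on which \eqref{e6} holds, is still controlled by Lemma~\ref{b4} under \eqref{e7}, yielding $\#\mathcal{R}_2'\ll(QT)^\varepsilon V^2 X Y^{1-2\sigma}$. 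Taking equalities in \eqref{e4} and \eqref{e7} so that $Y^{2-2\sigma}$ absorbs $V^2XY^{1-2\sigma}$, this gives
\[
\#\mathcal{R}_1+\#\mathcal{R}_2\ll(QT)^\varepsilon\Big(QT\,V^{-2k}+\big(V^2QT^{\frac12}\big)^{(2-2\sigma)/(2\sigma-1)}\Big),
\]
and balancing the two terms by choosing $V$ as a suitable power of $QT$ (generalising $V=Q^{(4\sigma-3)/2}T^{(3\sigma-2)/2}$ from the proof of \eqref{propb}) produces, for $k$ sufficiently large in terms of $\sigma$ and $\varepsilon$, the bound $(QT)^{2(1-\sigma)+\varepsilon}$.

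The main obstacle is the last step: Lemma~\ref{b4} contributes a factor $T^{\frac12}$ in the $\mathcal{R}_2'$ bound, so a naive optimisation produces something of the form $Q^\alpha T^{\alpha/2}$ with $\alpha=(2-2\sigma)/(2\sigma-1)$, which matches $(QT)^{2(1-\sigma)}$ only at $\sigma=1$. To recover the density-hypothesis exponent in the $T$-aspect, I would instead replace Lemma~\ref{b4} by Lemma~\ref{b5} applied on the critical line (so that $\Delta_2\ll QT+X$, with no $T^{\frac12}$), and bound $\sum|\mathfrak{M}_X|^2=\sum|M_XL|^2$ via a H\"older splitting $\sum|M_XL|^2\leqslant\big(\sum|M_X|^{2p}\big)^{1/p}\big(\sum|L|^{2q}\big)^{1/q}$ with $p$ taken slightly greater than $1$ and $q\to\infty$. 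The $L$-factor is then $\ll(QT)^{(1+\varepsilon)/q}\ll(QT)^\varepsilon$ by Conjecture~\ref{conj1}, and the $M_X$-factor is estimated by interpolating $\|M_X\|_{L^{2p}}$ between $\|M_X\|_{L^2}$ and $\|M_X\|_{L^{2m}}$ for a large integer $m$ (the latter bounded by Lemma~\ref{b5} applied to $M_X^m$). Making this interpolation rigorous, and checking that the arithmetic coefficient sum $\sum|(\mu^{*m})(n)|^2n^{-2\sigma}$ behaves as expected for $\sigma>\tfrac12$, is where the genuine content of the conjecture lies.
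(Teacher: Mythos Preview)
The paper gives no detailed argument here; it merely asserts that the statement ``follows immediately'' from the discrete form of Conjecture~\ref{conj1}. Your proposal contains the right ingredients but overcomplicates the deduction, and the closing remark that the interpolation step is ``where the genuine content of the conjecture lies'' is misleading: the genuine content is Conjecture~\ref{conj1} itself, and the passage from it to the density estimate is routine once one uses the correct mean-value lemma.

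The issue with your first approach is not the $V$-splitting, which is exactly right, but the use of Lemma~\ref{b4}. That lemma is invoked in the proof of \eqref{propb} only because the $L$-input there is a second moment, so one needs the self-referential term $QT^{1/2}|\mathcal{S}(Q)|$ to close the argument. With Conjecture~\ref{conj1} available for every $k$, take instead $V=(QT)^{\varepsilon}$ and $k$ large enough that $2k\varepsilon>1$; then the set of $(\varrho,\chi)\in\mathcal{R}_2$ with $|L(\tfrac12+it_\varrho,\chi)|\geqslant V$ has cardinality $\ll(QT)^{1+\varepsilon}V^{-2k}\ll1$, and on the complement $\mathcal{R}_2'$ one has $|M_X(\tfrac12+it_\varrho,\chi)|\gg Y^{\sigma-1/2}(QT)^{-\varepsilon}$ directly from \eqref{e3}. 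Now bound both $\sum_{\mathcal{R}_2'}|M_X|^2$ and $\#\mathcal{R}_1$ by Lemma~\ref{b5} with $\Delta_2(Q,T,N)\ll QT+N$ (this is exactly the mechanism behind \eqref{propa}, not \eqref{propb}). One obtains
\[
\#\mathcal{R}_1+\#\mathcal{R}_2\ll(QT)^\varepsilon\big((QT)X^{1-2\sigma}+Y^{2-2\sigma}+(QT+X)Y^{1-2\sigma}\big),
\]
and the choice $X=Y=QT$ gives $(QT)^{2(1-\sigma)+\varepsilon}$ at once. No balancing of $V$, no H\"older interpolation of $\|M_X\|_{2p}$ between $\|M_X\|_2$ and $\|M_X\|_{2m}$, and no analysis of $\mu^{*m}$ is required.

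Your second approach would also succeed, but it is an unnecessarily elaborate route to the same place: the whole point of having all moments bounded by $(QT)^{1+\varepsilon}$ is that it lets one treat $|L|$ as $O((QT)^\varepsilon)$ pointwise after discarding a negligible exceptional set, reducing the problem to the large sieve for $M_X$ alone.
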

    The above is often referred to as the density conjecture for real characters.  Naturally, analogous conjectures can be made about higher order characters, however, for the sake of brevity, we do not write them down here.
    
\vspace*{.5cm}

\noindent{\bf Acknowledgments.}  The author would like to thank the University of New South Wales for access to some of the resources that were necessary to complete this paper. Thanks is also given to Dr.~Liangyi~Zhao who assisted the author in obtaining hard copies of \cite{opera} and \cite{iwakow}.

\bibliography{secondmoment}

@book{HM,
author={H. L. Montgomery},
title={{T}opics in {M}ultiplicative {N}umber {T}heory},
publisher={Spring-Verlag},
volume={227},
series={Lecture Notes in Mathematics},
address={Berlin},
year={1971},
}

@misc{corrigan,
author={C. C. Corrigan},
title={On the distribution of zeros for families of {D}irichlet ${L}$-functions},
institution={The University of New South Wales},
year={2022},
note={(Honours Thesis)},
}

@article{harlit,
author={G. H. Hardy and J. E. Littlewood},
title={Contributions to the theory of the {R}iemann zeta-function and the theory of the distribution of primes},
journal={Acta Math.},
volume={41},
year={1918},
pages={119-196},
}

@article{ing,
author={A. E. Ingham},
title={Mean-value theorems in the theory of the {R}iemann zeta-function},
journal={Proc. London Math. Soc.},
volume={27},
year={1927},
pages={273-300},
}

@article{lit,
author={J. E. Littlewood},
title={Researches in the theory of the {R}iemann $\zeta$-function},
journal={Proc. London Math. Soc.},
volume={20},
year={1922},
note={Records},
pages={xxii-xxvii},
}

@article{tit3,
author={E. C. Titchmarsh},
title={The mean-value of the zeta-function on the critial line},
journal={Proc. London Math. Soc.},
volume={27},
year={1928},
pages={137-150},
}

@article{yuvlinnik,
author={Yu. V. Linnik},
title={All large numbers are the sums of two
squares and a prime (a problem of {H}ardy and {L}ittlewood), {II}},
journal={Mat. Sb.},
volume={53},
year={1961},
pages={3-38},
}

@article{payley,
author={R. E. A. C. Payley},
title={On the $k$-analogues of some theorems in the theory of the {R}iemann zeta-function},
journal={Proc. London Math. Soc.},
volume={32},
year={1931},
pages={273-311},
}

@article{haselgrove,
author={C. B. Haselgrove},
title={Some theorems in the analytic theory of numbers},
journal={J. London Math. Soc.},
volume={26},
year={1951},
pages={273-277},
}

@article{jut1,
author={M. Jutila},
title={On mean values of {$L$}-functions and short character sums with real characters},
journal={Acta Arith.},
volume={26},
year={1975},
pages={405–410},
}

@article{jut2,
author={M. Jutila},
title={On mean values of {D}irichlet polynomials with real characters},
journal={Acta Arith.},
volume={27},
year={1975},
pages={191-198},
}

@article{tit1,
author={E. C. Titchmarsh},
title={On van der {C}orput's method and the zeta function of {R}iemann ({III})},
journal={Q. J. Math.},
volume={3},
year={1932},
pages={133-141},
}

@article{tit2,
author={E. C. Titchmarsh},
title={On van der {C}orput's method and the zeta function of {R}iemann ({V})},
journal={Q. J. Math.},
volume={5},
year={1934},
pages={195-210},
}

@article{atkinson,
author={F. V. Atkinson},
title={The mean-value of the {R}iemann zeta function},
journal={Acta Math.},
volume={81},
year={1949},
pages={353-376},
}

@article{tit10,
author={E. C. Titchmarsh},
title={The mean value of $\left|\zeta\left(\tfrac{1}{2}+it\right)\right|^4$},
journal={Q. J. Math.},
volume={8},
year={1937},
pages={107-112},
}

@article{atkinson2,
author={F. V. Atkinson},
title={The mean value of the zeta-function on the critical line},
journal={Proc. London Math. Soc.},
volume={47},
year={1941},
pages={174-200},
}

@article{bellman10,
author={R. Bellman},
title={Wigert's approximate functional equation and the {R}iemann zeta function},
journal={Duke Math. J.},
volume={16},
year={1949},
pages={547-552},
}

@article{gallgs,
    AUTHOR = {Gallagher, P. X.},
     TITLE = {The large sieve},
   JOURNAL = {Mathematika},
  FJOURNAL = {Mathematika. A Journal of Pure and Applied Mathematics},
    VOLUME = {14},
      YEAR = {1967},
     PAGES = {14-20},
}

@article{DRHB,
    AUTHOR = {Heath-Brown, D. R.},
     TITLE = {A mean value estimate for real character sums},
   JOURNAL = {Acta Arith.},
  FJOURNAL = {Acta Arithmetica},
    VOLUME = {72},
      YEAR = {1995},
     PAGES = {235-275},
}

@article{baieryoung,
    AUTHOR = {Baier, S. and Young, M. P.},
     TITLE = {Mean values with cubic characters},
   JOURNAL = {J. Number Theory},
  FJOURNAL = {Journal of Number Theory},
    VOLUME = {130},
      YEAR = {2010},
     PAGES = {879-903},
}

@article{gaozhao,
    AUTHOR = {Gao, P. and Zhao, L.},
     TITLE = {Moments of central values of quartic {D}irichlet
              {$L$}-functions},
   JOURNAL = {J. Number Theory},
  FJOURNAL = {Journal of Number Theory},
    VOLUME = {228},
      YEAR = {2021},
     PAGES = {342-358},
}

@article{cozh,
author={C. C. Corrigan and L. Zhao},
title={Zero density theorems for families of {D}irichlet ${L}$-functions},
journal={Bull. Aust. Math. Soc.},
volume={108},
year={2023},
pages={224-235},
}

@article{rane,
author={V. V. Rane},
title={A note on the mean value of ${L}$-series},
journal={Proc. Indian Acad. Sci. Math. Sci.},
volume={90},
year={1981},
pages={273-286},
}

@article{buihb,
author={H. M. Bui and D. R. Heath-Brown},
title={A note on the fourth moment of {D}irichlet ${L}$-functions},
journal={Acta Arith.},
volume={141},
year={2010},
pages={335-344},
}

@misc{wwang,
author={W. Wang},
title={{F}ourth power mean value of {D}irichlet’s ${L}$-functions, {I}nternational {S}ymposium in {M}emory of {H}ua {L}oo {K}eng: {V}olume {I} {N}umber theory},
publisher={Springer, Berlin},
year={1991},
pages={293-321},
}

@article{topa,
author={B. Topacogullari},
title={The fourth moment of individual {D}irichlet ${L}$-functions on the critical line},
journal={Math. Z.},
volume={298},
year={2021},
pages={577–624},
}

@article{moto2,
author={Y. Motohashi},
title={A note on the mean value of the zeta and ${L}$-functions. {II}.},
journal={Proc. Japan Acad. Ser. A Math. Sci.},
volume={61},
year={1985},
pages={313–316},
}

@article{vvrane,
author={V. V. Rane},
title={On the mean square value of {D}irichlet ${L}$-series},
journal={J. London Math. Soc.},
volume={21},
year={1980},
pages={203-215},
}

@article{meurman,
author={T. Meurman},
title={A generalisation of {A}tkinson's formula to ${L}$-functions},
journal={Acta Arith.},
volume={48},
year={1986},
pages={351-370},
}

@article{gal2,
author={P. X. Gallagher},
title={Local mean value and density estimates for {D}irichlet ${L}$-functions},
journal={Ind. Math. (Proc.)},
volume={78},
year={1975},
pages={259-264},
}

@article{moto1,
author={Y. Motohashi},
title={A note on the mean value of the zeta and ${L}$-functions. {I}.},
journal={Proc. Japan Acad. Ser. A Math. Sci.},
volume={61},
year={1985},
pages={222-224},
}

@article{elliott7,
author={P. D. T. A. Elliott},
title={On the distribution of the values of {D}irichlet ${L}$-series in the half-plane $\sigma>\tfrac{1}{2}$},
journal={Ind. Math.},
volume={33},
year={1971},
pages={222-234},
}

@article{zz28,
author={J. Bourgain},
title={Decoupling, exponential sums, and the {R}iemann zeta function},
journal={J. Amer. Math. Soc.},
volume={30},
year={2017},
pages={205-224},
}

@article{zz26,
author={E. Bombieri and H. Iwaniec},
title={On the order of $\zeta\big(\tfrac{1}{2}+it\big)$},
journal={Ann. Scuola Norm. Sup. Pisa Cl. Sci.},
volume={13},
year={1986},
pages={449-472},
}

@book{zz80,
author={M. N. Huxley},
title={Integer points, exponential sums, and the {R}iemann zeta function, {N}umber theory for the millennium, {II} ({U}rbana, {IL}, 2000)},
publisher={A K Peters, Natick, MA},
year={2002},
pages={275-290},
}

@article{zz81,
author={M. N. Huxley},
title={Exponential sums and the {R}iemann zeta function, {V}},
journal={Proc. London Math. Soc.},
volume={90},
year={2005},
pages={1-41},
}

@article{zz104,
author={G. Kolesnick},
title={The improvement of the error term in the divisor problem},
journal={Mat. Zametki},
volume={6},
year={1969},
pages={545-554},
}

@article{zz105,
author={G. Kolesnick},
title={On the estimation of certain trigonometric sums},
journal={Acta Arith.},
volume={25},
year={1973},
pages={7-30},
}

@article{zz106,
author={G. Kolesnick},
title={On the order of $\zeta\big(\tfrac{1}{2}+it\big)$ and $\Delta(R)$},
journal={Pacific J. Math.},
volume={82},
year={1982},
pages={115-143},
}

@article{zz107,
author={G. Kolesnick},
title={On the method of exponential pairs},
journal={Acta Arith.},
volume={45},
year={1985},
pages={115-143},
}

@article{zz180,
author={E. C. Titchmarsh},
title={On van der {C}orput's method and the zeta function of {R}iemann ({II})},
journal={Q. J. Math},
volume={2},
year={1931},
pages={313-320},
}

@article{zz184,
author={E. C. Titchmarsh},
title={On the order of $\zeta\big(\tfrac{1}{2}+it\big)$},
journal={Q. J. Math},
volume={13},
year={1942},
pages={11-17},
}

@article{zz34,
author={J. G. van der Corput},
title={Versch\"arfung der {A}bsch\"atzung beim {T}eilerproblem},
journal={Math. Ann.},
volume={89},
year={1922},
pages={39-65},
}

@article{zz35,
author={J. G. van der Corput},
title={Zum {T}eilerproblem},
journal={Math. Ann.},
volume={98},
year={1928},
pages={697-716},
}

@article{zz113,
author={E. Lindel\"of},
title={Quelques remarques sur la croissance de la fonction $\zeta(s)$},
journal={Bull. Sci. Math.},
volume={32},
year={1908},
pages={341-356},
}

@article{zz195,
author={H. Weyl},
title={\"{U}ber die {G}leichverteilung von {Z}ahlen mod. {E}ins},
journal={Math. Ann.},
volume={77},
year={1916},
pages={313-352},
}

@article{ar1,
author={M. N. Huxley and N. Watt},
title={Hybrid bounds for {D}irichlet's ${L}$-functions},
journal={Math. Proc. Camb. Phil. Soc.},
volume={129},
year={2000},
pages={385-415},
}

@book{ar3,
author={M. N. Huxley and N. Watt},
title={Congruence families of exponential sums, {A}nalytic number theory},
publisher={Cambridge University Press},
series={London Math. Soc. Lecture Note Series},
volume={247},
year={1997},
pages={127-138},
}

@misc{ar4,
author={N. Watt},
title={A hybrid bound for {D}irichlet ${L}$-functions on the critical line, {P}roceedings of the {A}malfi {C}onference on {A}nalytic {N}umber {T}heory (1989)},
publisher={University di Salerno},
year={1992},
pages={387-392},
}

@article{wa1,
author={N. Watt},
title={On the mean squared modulus of a {D}irichlet ${L}$-function over a short segment of the critical line},
journal={Acta Arith.},
volume={111},
year={2004},
pages={307-403},
}

@article{ivic,
author={A. Ivi\'{c}},
title={Exponent pairs and the zeta function of {R}iemann},
journal={Studia Sci. Math. Hungar.},
volume={15},
year={1980},
pages={157-181},
}

@article{zh1,
author={W. Zhang},
title={On the mean square value of {D}irichlet's ${L}$-functions},
journal={Comp. Math.},
volume={84},
year={1992},
pages={59-69},
}

@article{zh2,
author={M. Katsurada},
title={Asymptotic expansions of the mean values of {D}irichlet ${L}$-functions. {III}},
journal={Manuscripta Math.},
volume={83},
year={1994},
pages={425-442},
}

@article{zh3,
author={M. Katsurada and M. Matsumoto},
title={Asymptotic expansions of the mean values of {D}irichlet ${L}$-functions},
journal={Math. Z.},
volume={208},
year={1991},
pages={23-39},
}

@article{art1,
author={D. R. Heath-Brown},
title={An asymptotic series for the mean value of {D}irichlet ${L}$-functions},
journal={Comment. Math. Helv.},
volume={56},
year={1981},
pages={148-161},
}

@article{hb4,
author={D. R. Heath-Brown},
title={The fourth power moment of the {R}iemann zeta function},
journal={Proc. London Math. Soc.},
volume={38},
year={1979},
pages={385-422},
}

@misc{hb5,
author={N. I. Zavorotny\u{\i}},
title={On the fourth moment of the {R}iemann zeta-function, {A}utomorphic functions and number theory, {I}},
publisher={Akad. Nauk SSSR, Vladivostok},
year={1989},
pages={69-125},
}

@article{hb6,
author={A. Ivi\'{c} and Y. Motohashi},
title={On the fourth moment of the {R}iemann zeta-function},
journal={J. Number Theory},
volume={51},
year={1995},
pages={16-45},
}

@book{opera,
author={J. B. Friedlander and H. Iwaniec},
title={Opera de {C}ribro},
series={Amer. Math. Soc. Coll. Publ.},
volume={57},
publisher={Amer. Math. Soc., Providence, RI},
year={2010},
}

@book{iwakow,
author={H. Iwaniec and E. Kowalski},
title={Analytic {N}umber {T}heory},
series={Amer. Math. Soc. Coll. Publ.},
volume={53},
publisher={Amer. Math. Soc., Providence, RI},
year={2004},
}

@article{abc1,
author={D. R. Heath-Brown},
title={The fourth power mean of {D}irichlet's ${L}$-functions},
journal={Analysis},
volume={1},
year={1981},
pages={25-32},
}

@misc{abc2,
author={K. Soundararajan},
title={The fourth moment of {D}irichlet ${L}$-functions},
series={Clay Math. Proc.},
volume={7},
publisher={Amer. Math. Soc., Providence, RI},
year={2007},
}

@article{ab1,
author={H. M. Bui and J. P. Keating},
title={On the mean values of {D}irichlet ${L}$-functions},
journal={Proc. London Math. Soc.},
volume={95},
year={2007},
pages={273-298},
}

@article{ab2,
author={P. Gao},
title={Bounds for moments of {D}irichlet ${L}$-functions to a fixed modulus},
note={arXiv:2103.00149 [math.NT]},
year={2021},
}

@article{ab3,
author={P. Gao},
title={Sharp lower bounds for moments of quadratic {D}irichlet ${L}$-functions},
note={arXiv:2102.04027 [math.NT]},
year={2021},
}

@article{ab4,
author={P. Gao},
title={Sharp upper bounds for moments of quadratic {D}irichlet ${L}$-functions},
note={arXiv:2101.08483 [math.NT]},
year={2021},
}

@article{jut99,
author={M. Jutila},
title={On the mean value of ${L}\big(\tfrac{1}{2},\chi\big)$ for real characters},
journal={Analysis},
volume={1},
year={1981},
pages={149-161},
}

@article{rama2,
author={K. Ramachandra},
title={Application of a theorem of {M}ontgomery and {V}aughan},
journal={J. London Math. Soc.},
volume={10},
year={1975},
pages={482-486},
}

@incollection{balarama,
    AUTHOR = {Balasubramanian, R. and Ramachandra, K.},
     TITLE = {A hybrid version of a theorem of {I}ngham},
 BOOKTITLE = {Number theory ({O}otacamund, 1984)},
    SERIES = {Lecture Notes in Math.},
    VOLUME = {1122},
     PAGES = {38--46},
 PUBLISHER = {Springer, Berlin},
      YEAR = {1985},
      }

@article{gaozhaoold,
    AUTHOR = {Gao, P. and Zhao, L.},
     TITLE = {Large sieve inequalities for quartic character sums},
   JOURNAL = {Q. J. Math.},
    VOLUME = {63},
      YEAR = {2012},
     PAGES = {891-917},
}

@article{dunrad,
author={A. Dunn and M. Radziwi\l\l},
title={Bias in cubic {G}auss sums: {P}atterson’s conjecture},
year={preprint},
note={arXiv:2109.07463 [math.NT]},
}

@article{HM2,
author={H. L. Montgomery},
title={Zeros of {$L$}-functions},
journal={Invent. Math.},
volume={8},
year={1969},
pages={346–354},
}

@article{bombls,
author={E. Bombieri},
title={On the large sieve},
journal={Mathematika},
volume={12},
year={1965},
pages={201-225},
}

@article{vinogradov,
author={A. I. Vinogradov},
title={On the density hypothesis for {D}irichlet {$L$}-series},
journal={Izv. Akad. Nauk SSSR Ser. Mat.},
volume={29},
year={1965},
pages={903–934},
}

@article{pet1,
author={I. Petrow and M. P. Young},
title={The {W}eyl bound for {D}irichlet {$L$}-functions of cube-free conductor},
journal={Annals Math.},
volume={192},
year={2020},
pages={437-486},
}

@article{pet2,
author={I. Petrow and M. P. Young},
title={The fourth moment of {D}irichlet {$L$}-functions along a coset and the {W}eyl bound},
journal={Duke Math. J.},
volume={172},
year={2023},
pages={1879-1960},
}
\bibliographystyle{amsxport}

\end{document}